\newtheorem{prop}{Proposition}[section]
\newtheorem{cor}[prop]{Corollary}
\newtheorem{theorem}[prop]{Theorem}
\newtheorem{conj}[prop]{Conjecture}
\newtheorem{question}[prop]{Question}
\DeclareMathOperator{\Deg}{Deg}
\DeclareMathOperator{\Flec}{Flec}
\newcommand{\FF}{\mathbb{F}}
\newcommand{\CC}{\mathbb{C}}
\newcommand{\RR}{\mathbb{R}}
\title{Ruled surface theory and incidence geometry}
\author{Larry Guth}
\begin{document}

\begin{abstract} We survey the applications of ruled surface theory in incidence geometry.  We discuss some of the proofs and raise some open questions.
\end{abstract}

\maketitle

\section{Introduction}

In the last five years, there have been some interesting applications of ruled surface theory in incidence geometry, which started in the work that Nets Katz and I did on the Erd{\H o}s distinct distance problem \cite{GK}.  In this essay, we survey the role of ruled surface theory in incidence geometry.  

Ruled surface theory is a subfield of algebraic geometry.  A ruled surface is an algebraic variety that contains a line through every point.  Ruled surface theory tries to classify ruled surfaces and to describe their structure.  The incidence geometry questions that we study here are about finite sets of lines.  A ruled surface can be roughly thought of as an algebraic family of lines.  Some of the questions in the two fields are actually parallel, but they take place in two different settings -- the discrete setting and the algebraic setting.  We will discuss a connection between these two settings.  

The applications of ruled surface theory are the most technical part of \cite{GK}.  I wrote a book about polynomial methods in combinatorics, \cite{G}, including a chapter about applications of ruled surface theory.  My goal in that chapter was to give a self-contained proof of the results from \cite{GK} and to make the technical details as clean as I could.  In this essay, my goal is to give an overview -- we will discuss some results, some of the main ideas in the proofs, and some open problems.  

Here is an outline of the survey.  In Section 2, we discuss the combinatorial results that have been proven using ruled surface theory.  In Section 3, we sketch a proof of the simplest result in Section 2.  In the course of this sketch, we try to explain some tools from ruled surface theory and how those tools help us to understand combinatorial problems.  In Section 4, we discuss some open problems, exploring what other things we could hope to learn about incidence geometry by using the theory of ruled surfaces.  

I would like to thank the anonymous referees for helpful suggestions.

\section{Results and open questions}

In \cite{GK}, ruled surface theory is used to prove an estimate about the incidence geometry of lines in $\RR^3$ (and this estimate eventually leads to estimates about the distinct distance problem).  Recall that if $\frak L$ is a set of lines, then a point $x$ is an $r$-rich point of $\frak L$ if $x$ lies in at least $r$ lines of $\frak L$.  We write $P_r(\frak L)$ for the set of $r$-rich points of $\frak L$.  The theorem says that a set of lines in $\RR^3$ with many 2-rich points must have some special structure.

Before stating the theorem, we do a couple examples.  Because any two lines intersect in at most one point, a set of $L$ lines can have at most $L \choose 2$ 2-rich points.  A generic set of lines in the plane achieves this bound.  So a set of $L$ lines in $\RR^3$ can have at most $L \choose 2$ 2-rich points, and there is an example achieving this bound where all the lines lie in a plane.  This suggests the following question: if a set of $L$ lines in $\RR^3$ has on the order of $L^2$ 2-rich points, does it have to be the case that many of the lines lie in a plane?  Interestingly, the answer is no.  The counterexample is based on a degree 2 algebraic surface.  Consider the surface defined by the equation

$$ z - xy = 0. $$

\noindent This surface contains many lines.  For any $a \in \RR$, the surface contains the line parametrized by

$$ t \mapsto (a, t, at). $$

\noindent Similarly, for any $b \in \RR$, the surface contains the line parametrized by

$$ t \mapsto (t, b, tb). $$

\noindent If we choose $L/2$ values of $a$ and $L/2$ values of $b$, we get a set of $L$ lines contained in our surface with $L^2/4$ 2-rich points.  Any plane contains at most 2 of these lines.  The polynomial $z-xy$ is not unique: there are many other degree 2 polynomials that work equally well.

But in some sense, this is the only counterexample.  If a set of $L$ lines in $\RR^3$ has many 2-rich points, then it must be the case that many of the lines lie in either a plane or a degree 2 surface.  Here is a precise version of this statement.

\begin{theorem} \label{2richR3} (Katz and G., \cite{GK}) There is a constant $K$ so that the following holds.  Suppose that $\frak L$ is a set of $L$ lines in $\RR^3$.  Then either

\begin{itemize}

\item $|P_2(\frak L)| \le K L^{3/2}$ or

\item there is a plane or degree 2 algebraic surface that contains at least $L^{1/2}$ lines of $\frak L$.  

\end{itemize}

\end{theorem}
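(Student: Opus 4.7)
The plan is to combine the polynomial partitioning method with ruled surface theory. With $D$ a parameter to be set to roughly $L^{1/2}$, I would first produce a polynomial $P \in \RR[x,y,z]$ of degree at most $D$ such that $\RR^3 \setminus Z(P)$ decomposes into $O(D^3)$ open cells, each meeting at most $O(L/D^2)$ lines of $\mathfrak{L}$. The trivial per-cell bound of $O((L/D^2)^2)$ on the number of 2-rich points then sums to $O(L^2/D) = O(L^{3/2})$ over all cells.

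The remaining 2-rich points lie on $Z(P)$. A line $\ell \in \mathfrak{L}$ either lies in $Z(P)$ or meets it in at most $D$ points (by B\'ezout), so 2-rich points with at least one incident line not contained in $Z(P)$ contribute at most $LD = O(L^{3/2})$. The serious case is 2-rich points $x \in Z(P)$ where both incident lines lie on $Z(P)$. Writing $\mathfrak{L}' = \{\ell \in \mathfrak{L} : \ell \subset Z(P)\}$, the remaining task is to show either $|P_2(\mathfrak{L}')| = O(L^{3/2})$ or the second alternative of the theorem.

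To handle $P_2(\mathfrak{L}')$, I would decompose $Z(P)$ into irreducible components $S_i$ of degree $d_i$, with $\sum d_i \le D$, and apply ruled surface theory component by component. By a Cayley--Salmon / flecnode-polynomial argument, each $S_i$ of degree $\ge 3$ either contains only $O(d_i^2)$ lines or is ruled, with a unique $1$-parameter family of lines sweeping it out together with at most $O(d_i)$ non-ruling ``exceptional'' lines. In the ruled case, the generic uniqueness of the ruling forces 2-rich points on $S_i$ to concentrate on exceptional lines or on singular points of the ruling; careful counting gives a bound of roughly $O(d_i \cdot |\mathfrak{L}' \cap S_i|)$, which sums to $O(D \cdot L) = O(L^{3/2})$. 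Plane and quadric components are treated separately: if any single such component contains $\ge L^{1/2}$ lines of $\mathfrak{L}$, the second alternative of the theorem holds; otherwise each such component contributes at most $\binom{L^{1/2}}{2}$ 2-rich points internally, with cross-component incidences controlled by B\'ezout.

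The main obstacle is the ruled-surface step: one needs a quantitative Cayley--Salmon-type statement showing that any irreducible component of $Z(P)$ carrying many lines of $\mathfrak{L}$ must be either a plane, a quadric, or a ruled surface of degree $\ge 3$ with the tightly constrained structure described above. This is the algebraic-geometric heart of \cite{GK}; it relies on the flecnode polynomial and the classical fact that its vanishing on a surface characterizes ruledness. The partitioning setup and elementary B\'ezout counting, by contrast, are standard.
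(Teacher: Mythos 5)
Your architecture follows the original route of \cite{GK} (flecnode polynomial, Cayley--Salmon, and the structure theory of singly ruled surfaces of degree $\ge 3$), whereas the proof sketched in this survey, following \cite{GZ1} and \cite{G}, avoids singly ruled surfaces entirely: it shows the relevant irreducible surface is generically \emph{doubly} flecnodal, hence generically doubly ruled, hence of degree $1$ or $2$ by the classification theorem. Both routes are legitimate. However, your outline has a genuine counting gap, and it also discards the one step the survey identifies as the crucial clue. You obtain your polynomial by partitioning at degree $D\sim L^{1/2}$; but \emph{any} $L$ lines lie on a surface of degree $O(L^{1/2})$ by parameter counting, so this carries no information about $\mathfrak{L}$. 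The actual proofs use the hypothesis that lines carry many $2$-rich points to run a ``contagious vanishing'' degree reduction producing $P$ of degree $O(K^{-1}L^{1/2})$, strictly sub-generic, and this gap between $\Deg P$ and $L^{1/2}$ is what powers the B\'ezout dichotomies (e.g.\ $L > 11(\Deg P)^2$ forces $P \mid \Flec P$ on an irreducible component containing all the lines).

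The concrete gap is the case you leave unaddressed: an irreducible component $S_i$ of degree $d_i\ge 3$ that is \emph{not} ruled and contains $O(d_i^2)$ lines. With $D\sim L^{1/2}$ you may have $d_i\sim L^{1/2}$, so such a component can contain up to $\sim L$ lines of $\mathfrak{L}$ --- possibly all of them --- and the only a priori bound on its internal $2$-rich points is $\binom{O(d_i^2)}{2}=O(d_i^4)$, which summed over components can reach $D^4=L^2$, not $O(L^{3/2})$. Showing that $n$ lines on a non-ruled (hence non-plane, non-quadric) component have $O(n^{3/2})$ pairwise intersections is essentially the theorem itself restricted to that component, so it cannot be waved through; the standard fixes are either (i) an induction on the number of lines, applying the statement being proved to each $\mathfrak{L}\cap S_i$ with $n_i<L$ and summing $n_i^{3/2}\le L^{1/2}n_i$ (this is how the reducible case is closed in \cite{G}), or (ii) the doubly-flecnodal contagion argument, which shows that a component whose lines carry many internal $2$-rich points is generically doubly ruled and therefore already a plane or quadric. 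Your singly-ruled bound $O(d_i\,|\mathfrak{L}'\cap S_i|)$ and the plane/quadric and cross-component counts are fine, but as written the proposal has no mechanism for the non-ruled, many-lines components, and tuning the partitioning degree does not repair this.
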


By using this theorem repeatedly, we can prove a stronger estimate, which roughly says that if $|P_2(\frak L)|$ is much bigger than $L^{3/2}$, then almost all of the 2-rich points ``come from'' planes or degree 2 surfaces.  

\begin{cor} Suppose that $\frak L$ is a set of $L$ lines in $\RR^3$.  Then, there are disjoint subsets $\frak L_i \subset \frak L$ so that

\begin{itemize}

\item For each $i$, the lines of $\frak L_i$ lie in a plane or a degree 2 surface.

\item $ | P_2(\frak L) \setminus \cup_i P_2(\frak L_i ) | \le K L^{3/2}$. 

\end{itemize}

\end{cor}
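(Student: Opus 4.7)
The plan is to iterate Theorem \ref{2richR3}, peeling off a plane or quadric at each step. Initialize $\mathfrak{L}^{(0)} = \mathfrak{L}$, and at stage $k$ apply the theorem to $\mathfrak{L}^{(k)}$ (write $L_k = |\mathfrak{L}^{(k)}|$). If its first conclusion holds, stop and call this terminal index $N$. Otherwise the theorem produces a plane or degree-$2$ surface $S_k$ containing at least $L_k^{1/2}$ lines of $\mathfrak{L}^{(k)}$; set $\mathfrak{L}_k$ equal to the set of \emph{all} lines of $\mathfrak{L}^{(k)}$ that lie in $S_k$, and define $\mathfrak{L}^{(k+1)} = \mathfrak{L}^{(k)} \setminus \mathfrak{L}_k$. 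The $\mathfrak{L}_k$'s are then automatically disjoint, each lies in a plane or quadric, and crucially no line of $\mathfrak{L}^{(k+1)}$ is contained in $S_k$. Since $L_{k+1} \le L_k - L_k^{1/2}$, a short calculation yields $L_k^{1/2} \le L^{1/2} - k/2$, and hence $\sum_k L_k = O(L^{3/2})$ (compare with $\int (L^{1/2} - t/2)^2\, dt$).

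Next I would decompose the exceptional set $E := P_2(\mathfrak{L}) \setminus \bigcup_i P_2(\mathfrak{L}_i)$ as $E = A \cup B$, where $A := E \cap P_2(\mathfrak{L}^{(N)})$ and $B := E \setminus A$. The stopping rule gives $|A| \le |P_2(\mathfrak{L}^{(N)})| \le K L_N^{3/2} \le K L^{3/2}$. For $x \in B$: $x$ is 2-rich in $\mathfrak{L}$, has at most one line in $\mathfrak{L}^{(N)}$, and at most one line in each $\mathfrak{L}_i$ (since $x \notin P_2(\mathfrak{L}_i)$), so $x$ must lie on lines from at least two distinct classes. Let $i_1 = i_1(x) \ge 1$ be the smallest index for which $\mathfrak{L}_{i_1}$ contains a line through $x$. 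Then $x \in S_{i_1}$, and some other line $\ell'$ through $x$ lies in $\mathfrak{L}^{(i_1+1)}$; by construction $\ell' \not\subset S_{i_1}$, so $\ell'$ meets $S_{i_1}$ in at most $2$ points (a line outside a plane meets it at most once; a line outside a quadric at most twice). Hence the points of $B$ charged to $i_1$ number at most $2 L_{i_1+1}$, and summing over $i_1$ yields $|B| \le 2 \sum_k L_k = O(L^{3/2})$, completing the corollary with a constant somewhat larger than the $K$ from the theorem.

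The main obstacle is the charging argument for $B$: the ``cross-class'' 2-rich points, whose two lines belong to different $\mathfrak{L}_i$'s, are the genuinely new objects one has to handle. The key geometric input is that a line not contained in a degree-$\le 2$ surface meets it in at most $2$ points, which lets one realize each such $x$ as an intersection of a later line with a previously extracted surface. Extracting \emph{all} of $\mathfrak{L}^{(k)} \cap S_k$ at each stage (not only the $L_k^{1/2}$ lines guaranteed by the theorem) is what makes this work, since otherwise later lines could still lie inside old surfaces and the ``at most $2$ points'' bound would fail. The remaining bookkeeping ($\sum L_k = O(L^{3/2})$ together with the running bound on $|P_2(\mathfrak{L}^{(N)})|$) is routine.
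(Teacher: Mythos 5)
Your proof is correct and is essentially the paper's argument: the paper runs the same peeling process as an induction on the number of lines, absorbing the $\le 2L$ cross-surface points created at each step into the slack of the inequality $2L + K(L - L^{1/2})^{3/2} \le K L^{3/2}$, whereas you unroll the induction and bound the cross-class points by $2\sum_k L_k = O(L^{3/2})$ directly (at the harmless cost of a slightly larger constant). One detail where your write-up is more careful than the paper's: you insist that $\mathfrak{L}_k$ consist of \emph{all} lines of $\mathfrak{L}^{(k)}$ contained in $S_k$, which is exactly what is needed to apply the ``a line not contained in a plane or quadric meets it in at most two points'' bound to every remaining line, whereas the paper applies that bound to every line of $\mathfrak{L}'$ without remarking that $\mathfrak{L}_1$ must be taken maximal for this to hold.
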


\begin{proof} We prove the corollary by induction on the number of lines.  If $|P_2(\frak L)| \le K L^{3/2}$, then we are done.  Otherwise, by Theorem \ref{2richR3}, there is a subset $\frak L_1 \subset \frak L$, so that $| \frak L_1 | \ge L^{1/2}$ and all the lines of $\frak L_1$ lie in a plane or degree 2 surface.  We let $\frak L' = \frak L \setminus \frak L_1$.  By induction, we can assume the corollary holds for $\frak L'$ -- giving us disjoint subsets $\frak L_i \subset \frak L'$.  Suppose that a point $x$ is in $P_2(\frak L) \setminus \cup_i P_2(\frak L_i)$.  Then 
either $x$ lies in a line from $\frak L_1$ and a line from $\frak L'$, or else $x \in P_2(\frak L') \setminus \cup_i P_2(\frak L_i)$.  The lines of $\frak L_1$ all lie in a plane or regulus, and each line of $\frak L'$ intersects this plane or regulus at most twice, so the number of points of the first type is at most $2 L$.  By induction, the number of points of the second type is at most $K |\frak L'|^{3/2} \le K (L - L^{1/2})^{3/2}$.  In total, we see that

$$ | P_2(\frak L) \setminus \cup_i P_2(\frak L_i ) | \le 2 L + K (L - L^{1/2})^{3/2} \le K L^{3/2}, $$

\noindent closing the induction.  (In the last step, we have to assume that $K$ is sufficiently large, say $K \ge 100$.)

\end{proof}

Ruled surface theory plays a crucial role in the proof of Theorem \ref{2richR3}.  We will explain how in the next section.  Before doing that, we survey generalizations of Theorem \ref{2richR3}, discussing both known results and open problems.

The first question we explore is the choice of the field $\RR$.  Does the same result hold over other fields?  This question was answered by Kollar in \cite{K}.  He first proved that Theorem \ref{2richR3} holds over any field of characteristic zero.  Next he addressed fields of finite characteristic.  As stated, Theorem \ref{2richR3} does not hold over finite fields.  There is a counterexample over the field $\FF_q$ when $q$ is not prime -- see \cite{EH} for a description of this example.  Nevertheless, Kollar proved that Theorem \ref{2richR3} does hold over fields of finite characteristic if we add a condition on the number of lines.

\begin{theorem} \label{2richk3} (Corollary 40 in \cite{K}) Suppose that $k$ is any field.  Suppose that $\frak L$ is a set of $L$ lines in $k^3$.  If the characteristic of $k$ is $p > 0$, then assume in addition that $L \le p^2$.  Then either

\begin{itemize}

\item $|P_2(\frak L)| \le K L^{3/2}$ or

\item there is a plane or degree 2 algebraic surface that contains at least $L^{1/2}$ lines of $\frak L$.  

\end{itemize}

\end{theorem}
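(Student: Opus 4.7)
The strategy is to adapt the proof of Theorem \ref{2richR3} to a general field $k$, identifying where the characteristic enters. The polynomial-method and incidence bookkeeping parts are characteristic-free, and the only delicate ingredient is the Cayley-Salmon-type theorem that detects ruled surfaces; it is this ingredient that forces the condition $L \le p^2$.

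I would first run the polynomial method over $k$. Assume $|P_2(\frak L)| > K L^{3/2}$ for a large constant $K$. The space of polynomials of degree at most $D$ in $k[x,y,z]$ has dimension $\binom{D+3}{3} \sim D^3/6$, so for $D = C L^{1/2}$ with $C = (10K)^{1/3}$ there is a nonzero $P$ of degree $\le D$ vanishing on every 2-rich point. An averaging argument shows the average line of $\frak L$ contains at least $|P_2|/L > K L^{1/2} > D$ points of $P_2(\frak L)$; since a univariate polynomial of degree $\le D$ with more than $D$ zeros vanishes identically, a positive fraction of the lines of $\frak L$ lie entirely in $Z(P)$. This step uses only linear algebra and holds over any field.

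Next I would decompose $Z(P)$ into irreducible components $S_i$ over the algebraic closure $\bar k$, with $\sum \deg S_i \le D$, and sort the lines in $\frak L \cap Z(P)$ by component. If some $S_i$ of degree $\le 2$ contains $\ge L^{1/2}$ lines of $\frak L$, the conclusion holds. Otherwise, all components carrying many lines have degree $d \ge 3$. A Cayley-Salmon theorem then asserts that any such $S_i$ containing more than $O(d^2)$ lines of $\frak L$ must be ruled, and on an irreducible ruled surface of degree $d \ge 3$ a general ruling line meets only $O(d)$ other lines of $S_i$. Each such $S_i$ therefore contributes $O(d \cdot n_i)$ 2-rich points, where $n_i$ is the number of lines on it; summing over components and using $\sum \deg S_i \le D \sim L^{1/2}$ bounds the total by $O(L^{3/2})$, contradicting the hypothesis for $K$ sufficiently large.

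The main obstacle, and the substance of Kollár's contribution, is the extension of the Cayley-Salmon theorem to positive characteristic. In characteristic zero one proves it using the flecnode polynomial $\Flec(P)$, a differential covariant in the third-order partials of $P$: vanishing of $\Flec(P)$ on $Z(P)$ is equivalent to $Z(P)$ being ruled. In characteristic $p$, derivatives of $x^n$ with $n \ge p$ collapse, and surfaces of degree $> p$ can be inseparably covered by Frobenius-images of lines without being honestly ruled in the sense we need. Kollár circumvents both issues using Hasse derivatives and by restricting to polynomials of degree $\le p$; since in our application $\deg P = D \sim L^{1/2}$, the hypothesis $L \le p^2$ is exactly what ensures $D \le p$ and keeps Kollár's Cayley-Salmon theorem available. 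With that input in hand, the combinatorial steps above close the argument.
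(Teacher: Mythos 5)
The survey does not actually prove this theorem --- it is quoted from Koll\'ar's paper (Corollary 40 of \cite{K}), and the only proof material in the text is the Section 3 sketch of the complex-field analogue, Theorem \ref{2richC3}, via the Guth--Zahl method. Your proposal follows the Guth--Katz/Koll\'ar route instead, which is in fact how the cited result is proved, and your identification of where the characteristic enters --- the Cayley--Salmon step, with $L \le p^2$ serving to keep $\deg P$ on the order of $p$ --- is the right one. The two routes differ in an instructive way. In the survey's sketch the degree reduction is pushed further: random sampling and contagious vanishing give $\deg P = O(K^{-1}L^{1/2})$, strictly \emph{below} the generic threshold $L^{1/2}$, and this smallness is essential there because the argument needs $\deg P \cdot \deg \Flec_{2,j}P \ll L$ to run the Bezout/contagion step, conclude that $Z(P)$ is generically doubly ruled, and then invoke the classification of doubly ruled surfaces. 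Your polynomial, chosen to vanish on all the $2$-rich points, has degree $\sim K^{1/3}L^{1/2}$, i.e.\ \emph{above} $L^{1/2}$; that is adequate for your counting scheme but could not feed into the doubly-flecnodal machinery. What your route buys is that it never needs doubly flecnodal points or the classification theorem, at the price of needing the structure theory of singly ruled surfaces for the final count.

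Two elisions in your sketch need real content. First, ``a general ruling line meets only $O(d)$ other lines of $S_i$'' is false as stated: on a cone every generator meets every other generator (at the apex). The statement actually used is that an irreducible ruled surface of degree $d \ge 3$ that is not a plane or regulus has at most finitely many exceptional lines and exceptional (cone) points, and away from these a line of the surface meets the union of the other lines in $O(d)$ points; the exceptional objects must then be counted separately, though they contribute only $O(n_i)$ further $2$-rich points. Second, the bookkeeping in characteristic $p$: your $D = (10K)^{1/3}L^{1/2} \le (10K)^{1/3}p$ is not $\le p$, so either the polynomial must be chosen more economically (vanishing on a pruned set of lines rather than on all rich points, as in the survey's degree-reduction step, which yields degree $O(K^{-1}L^{1/2})$) or one must verify that Koll\'ar's positive-characteristic input tolerates the constant. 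Relatedly, ``a positive fraction of the lines lie in $Z(P)$'' does not follow from the average alone; one needs the standard pruning iteration, which the survey hides behind the assumption that every line carries about the same number of rich points.
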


(In particular, this implies that Theorem \ref{2richR3} holds over prime finite fields $\FF_p$.  The reason is that $|P_2(\frak L)| \le | \FF_p^3 | = p^3$.  So if $L \ge p^2$, then $|P_2(\frak L)| \le L^{3/2}$ trivially, and if $L \le p^2$, then Theorem \ref{2richk3} applies.)  

For context, we compare this situation with the Szemer\'edi-Trotter theorem, the most fundamental theorem in incidence geometry.  The Szemer\'edi-Trotter theorem says that for a set of $L$ lines in $\RR^2$, the number of $r$-rich points is $\lesssim L^2 r^{-3} + L r^{-1}$.  This theorem is also true over $\CC^2$, but the proof is much harder -- cf. \cite{Toth} and \cite{Zahl}.  The situation over finite fields is not understood and is a major open problem, cf. \cite{BKT}.  In contrast, Theorem \ref{2richk3} works equally well over any field.   This makes the finite field case of Theorem \ref{2richk3} particularly interesting and useful.  For instance, Rudnev \cite{R} and Roche-Newton-Rudnev-Shkredov \cite{RRS} have applied Theorem \ref{2richk3} to prove new bounds about the sum-product problem in finite fields.  The preprint \cite{RuSe} discusses some other combinatorial problems that can be addressed using Theorem \ref{2richk3}.  

The second question we explore is the role of lines.  What happens if we replace lines by circles?  Or by other curves in $\RR^3$?  In \cite{GZ1}, Josh Zahl and I proved a version of Theorem \ref{2richR3} for algebraic curves of controlled degree.

\begin{theorem} \label{2richcurves} (\cite{GZ1}) For any $d$ there are constants $C(d), c_1(d) > 0$ so that the following holds.  Suppose that $k$ is any field.  Suppose that $\Gamma$ is a set of $L$ irreducible algebraic curves in $k^3$ of degree at most $d$.  If the characteristic of $k$ is $p > 0$, then assume in addition that $L \le c_1(d) p^2$.  Then either

\begin{itemize}

\item $|P_2(\frak L)| \le C(d) L^{3/2}$ or

\item there is an algebraic surface of degree at most $100 d^2$ that contains at least $L^{1/2}$ curves of $\frak L$.  

\end{itemize}

\end{theorem}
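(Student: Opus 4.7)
I would imitate the proof of Theorem~\ref{2richR3} sketched in Section~3, replacing lines by degree-$d$ curves and replacing the Cayley--Salmon flecnode polynomial by an analog for degree-$d$ curves. Assume $|P_2(\Gamma)| > C(d) L^{3/2}$ with $C(d)$ sufficiently large; the goal is to produce a surface of degree $\le 100 d^2$ containing at least $L^{1/2}$ curves of $\Gamma$. By a parameter count there is a nonzero $P \in k[x,y,z]$ of degree $D \lesssim L^{1/2}$ vanishing on a chosen set of $\gtrsim L^{3/2}$ points of $P_2(\Gamma)$. By Bezout, any $\gamma \in \Gamma$ meeting $Z(P)$ in more than $dD$ points lies in $Z(P)$; a dyadic pigeonhole over the number of selected 2-rich points per curve then produces $\Gamma_0 \subset \Gamma$ with $|\Gamma_0| \gtrsim L$ and $\gamma \subset Z(P)$ for every $\gamma \in \Gamma_0$. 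Factor $P = \prod_i P_i$ and set $S_i = Z(P_i)$, $D_i = \deg P_i$. A further pigeonhole selects an irreducible component $S_{i_0}$ containing $\gtrsim L^{1/2}$ curves of $\Gamma_0$. If $D_{i_0} \le 100 d^2$ we are done, so assume $D_{i_0} > 100 d^2$ and aim for a contradiction.

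\emph{Ruled surface step.} Construct a ``degree-$d$ flecnode polynomial'' $\Flec_d(P_{i_0})$ of degree $O_d(D_{i_0})$ whose vanishing at a smooth point $x \in S_{i_0}$ certifies that some irreducible degree-$d$ curve has contact of order $> dD_{i_0}$ with $S_{i_0}$ at $x$; Bezout then forces any such curve to lie in $S_{i_0}$. Each of the $\gtrsim L^{1/2}$ curves of $\Gamma$ contained in $S_{i_0}$ meets $Z(\Flec_d(P_{i_0}))$ in more than $d \cdot \deg \Flec_d$ points, so $\Flec_d(P_{i_0})$ vanishes identically on $S_{i_0}$. Hence through every smooth point of $S_{i_0}$ passes an irreducible degree-$d$ curve lying in $S_{i_0}$; that is, $S_{i_0}$ is ruled by degree-$d$ curves. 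Since $S_{i_0}$ contains $\gtrsim L^{1/2} \gg D_{i_0}$ such curves from $\Gamma$, while a single one-parameter ruling covers each smooth point by only one curve, a generic point of $S_{i_0}$ must lie on at least two distinct degree-$d$ curves contained in $S_{i_0}$, making $S_{i_0}$ doubly ruled by degree-$d$ curves. A Bezout/intersection computation for such doubly ruled surfaces (two families of degree-$d$ curves, each pair meeting in $O(d^2)$ points) then caps $\deg S_{i_0}$ at $100 d^2$, contradicting $D_{i_0} > 100 d^2$.

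\emph{Main obstacle.} The hardest step is constructing $\Flec_d$ with controlled degree and proving that it detects enough contact. The classical flecnode polynomial for lines exploits linearity via directional derivatives, and the key identities involve only low-order jets; degree-$d$ curves require jets of order on the order of $d$, and one must verify that the resulting polynomial has degree $O_d(D_{i_0})$, that it does not vanish identically on a generic non-ruled $S_{i_0}$, and that its derivative identities survive reduction modulo $p$. The hypothesis $L \le c_1(d) p^2$ enters here: it keeps $D \ll p$, so the iterated partial derivatives appearing in $\Flec_d$ remain nontrivial in characteristic $p$, paralleling the role of the $L \le p^2$ hypothesis in Theorem~\ref{2richk3}.
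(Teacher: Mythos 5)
Your outline has the right broad shape (degree reduction, a flecnode-type contagion, a classification of doubly ruled surfaces giving the $100d^2$ bound), but two steps, as written, do not close. The first is quantitative. You interpolate $P$ directly through a set of 2-rich points, which gives only $\Deg P = D \lesssim L^{1/2}$, and you then pigeonhole down to a single irreducible component $S_{i_0}$ containing merely $\gtrsim L^{1/2}$ curves before running the flecnode argument. To conclude that $\Flec_d(P_{i_0})$ vanishes identically on $S_{i_0}$ you must beat the Bezout bound for the number of degree-$d$ curves in $Z(P_{i_0}) \cap Z(\Flec_d (P_{i_0}))$, which is of order $D_{i_0}\cdot \Deg \Flec_d(P_{i_0}) = O_d(D_{i_0}^2)$. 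Since $D_{i_0}$ can be comparable to $D \sim L^{1/2}$, that bound is $O_d(L)$, and $L^{1/2}$ curves are nowhere near enough. This is precisely why the argument sketched in Subsection \ref{subsecdegred} works harder at the degree-reduction stage: it first reduces to the case where every curve carries $\ge K L^{1/2}$ points of $P_2$, then uses a random subsample of the \emph{curves} plus contagious vanishing to produce $P$ vanishing on \emph{all} $L$ curves with $\Deg P = O(K^{-1}L^{1/2})$. The gain of $K^{-1}$ over the trivial interpolation bound is what makes $(\Deg P)(\Deg \Flec P) \ll L$, so that the full family of $L$ curves exceeds Bezout; the $L^{1/2}$ in the conclusion emerges from a pigeonhole at the very end, after every relevant factor is already known to have degree at most $100d^2$ -- not as an intermediate hypothesis.

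The second gap is the inference ``$S_{i_0}$ is ruled by degree-$d$ curves and contains $\gg D_{i_0}$ curves of $\Gamma$, hence it is doubly ruled.'' This is false: an irreducible singly ruled surface of arbitrarily high degree carries a one-parameter (hence infinite) family of lines, so it can contain $L^{1/2}$ or even $L$ members of $\Gamma$ while a generic point lies on exactly one curve of the surface; your argument would cap its degree at $100d^2$, which is wrong. (Such configurations do not contradict the theorem because they have few 2-rich points.) Double ruledness must be extracted from the 2-rich points themselves: one defines a \emph{doubly} flecnodal condition, which -- as in Theorem \ref{doubflecSal} -- is a Boolean combination of vanishing conditions of several polynomials rather than the zero set of a single $\Flec_d$, shows by pigeonholing the finitely many sign patterns among the $\ge K L^{1/2}$ rich points on each curve that a generic point of each curve is doubly flecnodal, spreads this to a generic point of the surface by a Bezout argument, and only then invokes a Cayley--Salmon-type theorem and the $100d^2$ classification. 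Making this constructible-set machinery precise for degree-$d$ curves -- including the degree of the detecting polynomials (your claimed $O_d(D_{i_0})$ for a condition involving contact of order $\sim dD_{i_0}$ is itself doubtful), their nonvanishing on non-ruled surfaces, and their behavior in characteristic $p$ -- is the real content of \cite{GZ1}, which the proposal names as the obstacle but does not supply.
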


There are a couple reasons why I think it is natural to consider various algebraic curves instead of just straight lines.  One reason is that the proof is closely based on algebraic geometry.  Once we have a good understanding of the ideas involved, they apply naturally to all algebraic curves.  A second reason is that this more general result will probably have more applications.  For instance, we recall a little about the distinct distance problem in the plane.  In \cite{ES} Elekes and Sharir suggested an interesting new approach to the problem, connecting distinct distances in the plane to problems about the incidence geometry of some degree 2 algebraic curves in $\RR^3$.  In \cite{GK}, there is a clever change of coordinates so that these degree 2 curves become lines, and then Theorem \ref{2richR3} applies to bound the number of 2-rich points.  It appears to me that this clever change of coordinates was rather fortuitous.  I believe that most problems about algebraic curves cannot be reduced to the straight line case by a change of coordinates, and I think that when results along the lines of Theorem \ref{2richcurves} arise in applications, the curves involved will only sometimes be straight lines.    Theorem \ref{2richcurves} applies to the problem about degree 2 curves from \cite{ES}, and I think it will probably have more applications in the future.

The third question that we discuss is what happens in higher dimensions.  The situation in higher dimensions is not yet understood.  The following conjecture seems natural to me.  (Similar questions were raised in \cite{SSS} and \cite{Z}).  

\begin{conj} \label{mainconj} Let $k$ be any field.  Suppose that $\Gamma$ is a set of $L$ irreducible algebraic curves in $k^n$, of degree at most $d$.  If the characteristic of $k$ is $p> 0$, then also assume that $L \le p^{n-1}$.  

Then either

\begin{itemize}

\item $|P_2(\Gamma)| \le C(d, n) L^{\frac{n}{n-1}}$ or

\item There is a dimension $2 \le m \le n-1$, and an algebraic variety $Z$ of dimension $m$ and degree at most $D(d, n)$ so that $Z$ contains at least $L^{\frac{m-1}{n-1}}$ curves of $\Gamma$.  

\end{itemize}
\end{conj}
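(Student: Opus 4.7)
The plan is to prove a stronger, recursive version of the conjecture by induction on the dimension of an ambient variety. The generalization reads: for an irreducible variety $V$ of dimension $m_V \ge 2$ with degree bounded in terms of $d$ and $n$, and for a set $\Gamma_V$ of $L_V$ curves of degree $\le d$ contained in $V$, either $|P_2(\Gamma_V)| \le C(d,n,m_V)\, L_V^{m_V/(m_V-1)}$, or some subvariety $W \subsetneq V$ of dimension $m$ with $2 \le m < m_V$ and bounded degree contains at least $L_V^{(m-1)/(m_V-1)}$ curves of $\Gamma_V$. Specializing to $V = k^n$ recovers Conjecture~\ref{mainconj}. The crucial telescoping check is that if at a later recursive step we pass into an intermediate $V'$ of dimension $m_V'$ holding $L_V^{(m_V'-1)/(m_V-1)}$ curves and then find a subvariety of dimension $m$ with $L_{V'}^{(m-1)/(m_V'-1)}$ curves, the identity $(L_V^{(m_V'-1)/(m_V-1)})^{(m-1)/(m_V'-1)} = L_V^{(m-1)/(m_V-1)}$ delivers back the exponent demanded in the original claim.

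The engine of the induction is polynomial partitioning. Working first over $\RR$, I would choose $P$ of degree $D \sim L_V^{1/(m_V-1)}$ adapted to $V$: the complement $V \setminus Z(P)$ has $O(D^{m_V})$ cells, and by B\'ezout any degree-$d$ curve not lying in $Z(P)$ meets at most $dD$ cells. A standard convexity count bounds 2-rich points inside cells by $\lesssim d^{O(1)} L_V^{m_V/(m_V-1)}$. On $V \cap Z(P)$, 2-rich points involving at least one curve transverse to $Z(P)$ contribute at most $dD L_V = d L_V^{m_V/(m_V-1)}$ by a second application of B\'ezout. The remaining 2-rich points come from pairs of curves both contained in $V \cap Z(P)$; let $L_Z$ be the number of such curves. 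If $L_Z \ge L_V^{(m_V-2)/(m_V-1)}$, then an appropriate irreducible component of $V \cap Z(P)$ is the desired structural $W$ of dimension $m_V - 1$. Otherwise, apply the induction hypothesis inside each $(m_V-1)$-dimensional component: either the combinatorial alternative yields $L_Z^{(m_V-1)/(m_V-2)} < L_V$ 2-rich points there (which closes the combinatorial case for $V$), or the structural alternative recurses and, by the telescoping identity, produces a subvariety of lower dimension with exactly the curve count required by the original claim. The base case $m_V = 2$ is immediate: $|P_2(\Gamma_V)| \le \binom{L_V}{2} d^2 \lesssim L_V^2$ by B\'ezout.

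The principal obstacle is degree control. Polynomial partitioning produces a structural subvariety of degree $\sim L^{1/(n-1)}$ that grows with $L$, whereas the conjecture demands degree bounded by $D(d,n)$ alone; through recursion the nominal degree of $V$, and hence the constants in the inductive hypothesis, would blow up polynomially in $L$. In Theorem~\ref{2richR3} this reduction is exactly where ruled surface theory earned its keep: the flecnode polynomial identifies the ruled portion of a high-degree surface, and classical Cayley-Salmon-Segre theory shows that irreducible surfaces carrying more than a bounded number of lines must be planes or quadrics. To prove Conjecture~\ref{mainconj} one needs a systematic higher-dimensional analogue: a procedure that, given an $m$-dimensional variety carrying an algebraic family of degree-$d$ curves, extracts another $m$-dimensional variety of degree bounded in terms of $d$ and $n$ alone that still contains a comparable number of those curves. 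I expect this to be the main difficulty and the essential algebro-geometric content of the conjecture. A secondary obstacle is the passage from $\RR$ to arbitrary $k$, which should follow the characteristic-$p$ transfer techniques of Kollar~\cite{K}; the hypothesis $L \le p^{n-1}$ is the natural one for a partitioning polynomial of degree $\sim L^{1/(n-1)}$.
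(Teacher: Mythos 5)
The statement you set out to prove is an open conjecture: the paper offers no proof of it, and explicitly records that nothing close to it is known even for lines in $\CC^4$ or $\FF_p^4$, let alone in higher dimensions or for curves. So your proposal can only be judged as a strategy, and as a strategy it founders on the gap you yourself diagnose but do not repair. In your structural alternative, the recursion hands you an irreducible component $W$ of $V \cap Z(P)$ with $\Deg P \sim L^{1/(m_V-1)}$; this $W$ has degree growing with $L$, whereas the conjecture demands degree at most $D(d,n)$, a constant independent of $L$. This is not a loss one can absorb into the induction: the entire content of Theorem \ref{2richR3} in three dimensions is the replacement of a high-degree hypersurface containing the lines by bounded-degree doubly ruled pieces, and that replacement rests on the Cayley--Salmon--Monge theorem and the classification of doubly ruled surfaces in $\CC^3$ (Theorems \ref{CSM}, \ref{CSMdoub}, \ref{classdoubruled}). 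The higher-dimensional analogues you would need are precisely the paper's open Questions \ref{classques}--\ref{classquesm} (whether a generically doubly ruled variety in $\CC^n$ must be generically ruled by subvarieties of dimension at least $2$ and bounded degree), and the paper notes that a negative answer there could even yield counterexamples to the conjecture. Your ``principal obstacle'' is therefore not a deferred technicality; it is the conjecture.

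Two further problems with the framework itself. Polynomial partitioning is a real phenomenon (it uses the ham-sandwich theorem), so your setup does not exist over $\CC$ or over finite fields; the arguments that do work over arbitrary $k$ in three dimensions (Koll\'ar \cite{K}, Guth--Zahl \cite{GZ1}) proceed by degree reduction via contagious vanishing, and the hypothesis $L \le p^{n-1}$ serves to keep the degree of the auxiliary polynomial below $p$ so that derivative-based (flecnodal) conditions remain meaningful --- not to license a partitioning polynomial. And even over $\RR$, your induction on ambient varieties quantifies over bounded-degree varieties $V$, but from the first recursive step onward every ambient variety you produce has degree polynomial in $L$, so the inductive hypothesis is never applicable below the top level. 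The exponent telescoping is correct arithmetic, but it is bookkeeping for an induction whose hypotheses cannot be met.
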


There is some significant progress on this conjecture in four dimensions.  In \cite{SS}, Sharir and Solomon prove estimates for $r$-rich points of a set of lines in $\RR^4$.  These estimates only apply for fairly large $r$, not $r=2$, so they don't literally address this conjecture, but they establish sharp bounds in a similar spirit for larger values of $r$.  In \cite{GZ2}, Josh Zahl and I prove a slightly weaker estimate of this form for algebraic curves in $\RR^4$.  So far nothing close to this conjecture is known for lines in $\CC^4$ or over $\FF_p^4$.  Moreover,  nothing close to this conjecture is known in higher dimensions.  I think that this is a natural question, and that if it is true, it would probably have significant applications.  If it is false, that would also be interesting, and it would point to new subtleties in incidence geometry in higher dimensions.

In the next section, we discuss the proofs of the known results.  Afterwards, we come back and discuss how much these proofs can tell us about higher dimensions, and what new issues arise.

\section{How does ruled surface theory help in the proof}

In this section, we discuss some of the ideas in the proofs of the results from the last section.  The ideas we want to discuss are easiest to explain over $\CC$, so we first state a version of Theorem \ref{2richR3} over $\CC$.

\begin{theorem} \label{2richC3}  There is a large constant $K$ so that the following holds.  Suppose that $\frak L$ is a set of $L$ lines in $\CC^3$.  Then either

\begin{itemize}

\item $|P_2(\frak L)| \le K L^{3/2}$ or

\item there is a plane or degree 2 algebraic surface that contains at least $L^{1/2}$ lines of $\frak L$.  

\end{itemize}

\end{theorem}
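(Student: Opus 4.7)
The plan is to combine the polynomial method with the Cayley--Salmon theory of flecnode polynomials. Suppose for contradiction that $|P_2(\frak L)| > K L^{3/2}$ for a large constant $K$, and that no plane or irreducible quadric contains $L^{1/2}$ lines of $\frak L$; the goal is to derive a contradiction.

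\emph{Step 1 (polynomial interpolation).} With $D = C L^{1/2}$ and $C$ chosen so that $\binom{D+3}{3} > L(D+1)$, linear algebra in the $\binom{D+3}{3} \sim D^3/6$-dimensional space of polynomials of degree at most $D$ in $\CC[x,y,z]$ produces a nonzero $P$ of degree at most $D$ vanishing on every line of $\frak L$, since each line imposes only $D+1$ linear conditions on the coefficients. Factor $P = \prod P_i$ into irreducibles with degrees $d_i$ satisfying $\sum d_i \leq D$, and assign each line of $\frak L$ to a single component it lies in, producing disjoint $\frak L_i$ with $\sum |\frak L_i| = L$.

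\emph{Step 2 (flecnode and ruled structure).} Associate to each $P_i$ its flecnode polynomial $\Flec(P_i)$, of degree $11 d_i - 24$, whose zero set contains every line contained in $Z(P_i)$. A Bezout comparison shows that either $|\frak L_i| \leq d_i(11 d_i - 24) = O(d_i^2)$, or $P_i \mid \Flec(P_i)$, in which case Cayley--Salmon concludes that $Z(P_i)$ is a ruled surface. Each ruled component is then a plane, an irreducible quadric (doubly ruled), or an irreducible singly-ruled surface of degree $d_i \geq 3$.

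\emph{Step 3 (counting 2-rich points).} Split each 2-rich point as intra (both incident lines in the same $\frak L_i$) or cross (lines in different components). Bezout controls the cross contribution: a line in $\frak L_i$ meets the surface $Z(P_j)$ in at most $d_j$ points when $i \neq j$, so the cross total is at most $\sum_i |\frak L_i|(D - d_i) \leq LD = O(L^{3/2})$. The intra contribution from planes and quadrics, which by the standing assumption each carry fewer than $L^{1/2}$ lines of $\frak L$, is at most $(L^{1/2}/2) \sum_i |\frak L_i| \leq L^{3/2}/2$; the non-ruled components of degree $\geq 3$, whose line counts are already capped at $O(d_i^2)$, require more careful bookkeeping but are handled by similar ideas (possibly combined with an inductive peeling of components carrying too many lines).

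\emph{Main obstacle.} The real work, and the reason ruled surface theory is genuinely needed, lies in bounding the intra contribution from irreducible singly-ruled components of degree $d_i \geq 3$. The required structural input is that such a surface carries at most one $1$-parameter family of lines, together with at most $O(d_i)$ exceptional lines (directrices, the apex line of a cone, and the like), and that two generic rulings of this family are disjoint. This pins the number of 2-rich points among $|\frak L_i|$ lines on $Z(P_i)$ to $O(d_i |\frak L_i|)$, rather than the naive $O(|\frak L_i|^2)$; summing gives $\sum_i d_i |\frak L_i| \leq DL = O(L^{3/2})$. Combining all contributions yields $|P_2(\frak L)| = O(L^{3/2})$ with a constant independent of $K$, contradicting the assumed lower bound once $K$ is chosen large enough.
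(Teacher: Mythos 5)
Your outline follows the original Guth--Katz route (a single flecnode polynomial, Cayley--Salmon, and the structure theory of singly-ruled surfaces) rather than the doubly-flecnodal route sketched in Section 3; that choice is legitimate, but there is a genuine gap in Step 1 that the rest of the argument cannot recover from. You interpolate at the trivial degree $D = C L^{1/2}$ and never use the hypothesis $|P_2(\frak L)| > K L^{3/2}$ in constructing $P$. Every existing proof first prunes $\frak L$ so that each surviving line carries at least $\sim K L^{1/2}$ points of $P_2$, and then runs the degree-reduction / contagious-vanishing argument (Proposition 11.5 of \cite{G}) to get $\Deg P = O(K^{-1} L^{1/2})$. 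That factor of $K^{-1}$ is what makes the later steps close. Concretely, with $D \sim L^{1/2}$ your flecnode--Bezout dichotomy can land in the useless branch: an irreducible \emph{non-ruled} component of degree $d_i \sim L^{1/2}$ may contain up to $11 d_i^2 \sim L$ lines (non-ruled surfaces of degree $d$ really can carry on the order of $d^2$ lines, e.g.\ Fermat surfaces), so all of $\frak L$ may sit on a single non-ruled component, and your Step 3 then gives no bound at all on its intra-component $2$-rich points --- the ``inductive peeling'' you defer to has nothing to peel, since re-interpolating the same $L$ lines just returns the same surface. With the reduced degree, the non-ruled components contain at most $\sum_i 11 d_i^2 \le 11 (\Deg P)^2 = O(K^{-2} L) \ll L$ lines in total, which is what permits an induction on the number of lines; alternatively, in the route of Section 3, the reduced degree is exactly what lets the Bezout comparison force $P_j \mid \Flec_{2,j'} P_j$, since one needs the number of lines in a component to exceed $\Deg P_j \cdot \Deg \Flec_{2,j'} P_j \sim K^{-2} L$, which fails without the reduction.

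The structural input you isolate as the ``main obstacle'' --- that an irreducible singly-ruled surface of degree $d \ge 3$ has essentially one ruling family plus $O(1)$ exceptional lines, so that $k$ lines on it produce only $O(k d)$ mutual intersections --- is indeed the correct key lemma from \cite{GK}, and flagging it rather than proving it is acceptable in a sketch. Two smaller points: your cross-component Bezout count needs repair for lines that lie in more than one component $Z(P_j)$ (such a line is not transverse to $Z(P_j)$, so the ``at most $d_j$ intersection points'' bound does not apply; one must reassign those $2$-rich points as intra-component for a common factor), and the pruning step that guarantees every line carries many $2$-rich points must be done before degree reduction can even be invoked. The decisive missing ingredient, however, is the degree reduction itself.
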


To get started, we think a little about the role of planes and degree 2 algebraic surfaces.  What is special about planes and degree 2 algebraic surfaces that makes them appear here?  Planes and degree 2 surfaces are doubly ruled.  A ruled surface is an algebraic surface that contains a line through every point.  A doubly ruled surface is a surface that contains two distinct lines through every point.  

At this point, we can say a little about the connection between ruled surface theory and incidence geometry.  A doubly ruled surface can be roughly thought of as an algebraic family of lines with many 2-rich points.  In incidence geometry, one tries to classify finite sets of lines with many 2-rich points.  In ruled surface theory, one tries to classify doubly ruled surfaces -- that is, algebraic families of lines with many 2-rich points.  To prove Theorem \ref{2richC3}, we begin with a finite set of lines with many 2-rich points, and we build around it a whole doubly ruled surface.  Tools from ruled surface theory help to build this surface and they help to analyze the surface once it is built, ultimately leading to information about the original finite set of lines.

Doubly ruled algebraic surfaces in $\CC^3$ were classified in the 19th century.  It turns out that planes and degree 2 surfaces are the only irreducible doubly ruled surfaces.  These surfaces appear in the statement of Theorem \ref{2richC3} because they are the only irreducible doubly ruled surfaces.  Roughly speaking, a doubly ruled surface is an algebraic family of lines with many 2-rich points.  Theorem \ref{2richC3} is telling us that a finite configuration of lines with many 2-rich points must be related to an algebraic family of lines with many 2-rich points.  

At this point, let us pause to review some vocabulary from algebraic geometry that we will use in the rest of the essay.  After we set up this vocabulary, we can state things precisely, starting with the classification of doubly ruled surfaces in $\CC^3$.  

An algebraic set in $\CC^n$ is the set of common zeroes of a finite list of polynomials in $\CC[z_1, ..., z_n]$.  An algebraic set is called reducible if it is the union of two proper algebraic subsets.  Otherwise it is called irreducible.  An irreducible algebraic set in $\CC^n$ is also called an affine variety.  

Any affine variety $V$ in $\CC^n$ has a dimension.  The dimension of $V$ is the largest number $r$ so that there is a sequence of proper inclusions of non-empty varieties $V_0 \subset ... \subset V_r = V$.  The dimension of an algebraic set in $\CC^n$ is the maximum dimension of any irreducible subset.  An algebraic curve is a variety of dimension 1.  

Using the dimension, we can define a useful notion of the generic behavior of points in a variety.  We say that a generic point of an algebraic variety $V$ obeys condition $(X)$ if the set of points $p \in V$ where $(X)$ does not hold is contained in an algebraic subset $E \subset V$ with $\dim E < \dim V$.  For instance, we say that a 2-dimensional algebraic variety $\Sigma \subset \CC^3$ is generically doubly ruled if there is a 1-dimensional algebraic set $\gamma \subset \Sigma$, and every point of $\Sigma \setminus \gamma$ is contained in two lines in $\Sigma$.

An affine variety also has a degree.  There is a non-trivial theorem which says that for any affine variety $V$ in $\CC^n$ there is unique choice of $r$ and $d$ so that a generic $(n-r)$-plane in $\CC^n$ intersects $V$ in exactly $d$ points.  The value of $r$ is the dimension of $V$, as defined above.  The value of $d$ is called the degree of $V$.  

There is a nice short summary of facts about dimension and degree in Section 4 of \cite{SoTa}, which contains everything we have mentioned here.  A fuller treatment appears in Harris's book on algebraic geometry \cite{H}.  

This is all the terminology that we will need, and we now return to discussing doubly ruled surfaces.  We can now state a classification theorem for double ruled surfaces in $\CC^3$.  

\begin{theorem} \label{classdoubruled} (Classification of doubly ruled surfaces, cf. Proposition 13.30 in \cite{G}) Suppose that $P \in \CC[z_1, z_2, z_3]$ is an irreducible polynomial and that $Z(P)$ is generically doubly ruled.  Then $P$ has degree 1 or 2, and so $Z(P)$ is a plane or a degree 2 algebraic surface.
\end{theorem}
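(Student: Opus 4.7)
My strategy is to show that any irreducible, generically doubly-ruled surface $S = Z(P) \subset \CC^3$ lies inside a quadric surface; irreducibility of $P$ then forces $\deg P \le 2$.  The classical input I will rely on is that any three pairwise skew lines in $\CC^3$ lie on a unique irreducible quadric surface $Q$, and $Q$ is itself doubly ruled with those three lines in one of its two rulings.

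I would first extract $1$-parameter families of lines from the hypothesis.  Let $F(S) = \{\ell \in G(1,3) : \ell \subset S\}$ be the Fano variety of lines on $S$, and $I = \{(p,\ell) \in S \times F(S) : p \in \ell\}$ the incidence correspondence.  Generic double ruling makes the projection $I \to S$ generically $2$-to-$1$, so $\dim I = 2$; since $I \to F(S)$ has $1$-dimensional fibers, $\dim F(S) = 1$.  After discarding any components whose lines cover only a proper subvariety of $S$, we obtain either two distinct \emph{rulings} $R_1, R_2$ each mapping birationally onto $S$, or a single ruling $R$ mapping generically $2$-to-$1$ onto $S$.  I will focus on the two-ruling case; the single-ruling case reduces to it after pulling back to a branched double cover of $R$ on which the two sheets decouple.

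Next, I would choose three pairwise skew lines $\ell_1, \ell_2, \ell_3 \in R_1$.  Such a triple exists generically, because the only obstructions---all lines of $R_1$ passing through a common point, or all lying in a common plane---force $S$ to be a cone or a plane, and a cone is only singly ruled while a plane already yields $\deg P = 1$.  The classical fact above then produces a unique irreducible quadric $Q \supset \ell_1 \cup \ell_2 \cup \ell_3$.  To see that $R_2 \subset Q$, I would fix a generic $\ell_i$ and define a rational map $\Phi_i \colon \ell_i \to R_2$ sending a generic $p \in \ell_i$ to the unique line of $R_2$ through $p$.  This map is algebraic and non-constant: if all its values equaled a fixed $m$, then $m$ would meet $\ell_i$ in infinitely many points, forcing $m = \ell_i \in R_1$, contradicting $m \in R_2$.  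Hence $\Phi_i$ is dominant onto the irreducible curve $R_2$, so a generic $m \in R_2$ meets $\ell_i$.  Intersecting the dense open subsets obtained for $i = 1, 2, 3$, a generic $m \in R_2$ meets all three of $\ell_1, \ell_2, \ell_3$ at distinct points; since $\deg Q = 2$, having $|m \cap Q| \ge 3$ forces $m \subset Q$.  By Zariski closure every line of $R_2$ lies on $Q$, so $S = \bigcup_{m \in R_2} m \subset Q$, and irreducibility of $P$ gives $\deg P \le 2$.

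The main obstacle I expect is making the rational map $\Phi_i$ rigorous in the presence of the exceptional locus $\gamma$ from the definition of ``generically doubly ruled'', and, more seriously, handling the single-ruling case in which monodromy can exchange the two lines through a generic point of $S$ and prevent a global splitting of $F(S)$ into $R_1$ and $R_2$.  In that case one should work on a $2$-to-$1$ cover of $F(S)$ on which the two lines through a generic point become sections of separate families, and run the same dominance argument there.  A secondary subtlety is the degeneracy analysis: one must verify that if lines of $R_1$ are not generically pairwise skew, then $S$ is already a plane.
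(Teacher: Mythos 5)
Your argument is essentially correct, but it takes a genuinely different route from the one sketched in Subsection \ref{sketchclassif}. You run the classical synthetic argument: extract a one\--dimensional family of lines from the double ruling, pick three pairwise skew lines $\ell_1,\ell_2,\ell_3$ in one family, pass the unique quadric $Q$ through them, and show that a generic line of the other family meets all three at distinct points and hence lies in $Q$, forcing $Z(P)\subset Q$ and $\Deg P\le 2$. The proof outlined in the paper instead produces two \emph{large finite} families of curves in $Z(P)$ with many distinct pairwise intersection points and then runs a contagious-vanishing/degree-reduction argument with an auxiliary polynomial of controlled degree, concluding via the Bezout theorem that $P$ divides that polynomial. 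The paper's route generalizes to curves of degree $d$ (giving $\Deg P\le 100d^2$) and meshes with the rest of the machinery, but as sketched it does not give the sharp conclusion ``degree $1$ or $2$''; your route gives the sharp bound for lines but leans on the special projective geometry of quadrics (work in $\mathbb{P}^3$, so that ``skew'' means disjoint there) and does not generalize to higher-degree curves. Both are legitimate proofs of the theorem as stated.

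Two of the loose ends you flag deserve a word, though neither is fatal. First, the genericity issue: a generic point of a fixed line $\ell_i$ need not be a generic point of $Z(P)$, so you must choose $\ell_i$ generically in its ruling; this works because the exceptional locus $\gamma$, and likewise the locus not covered by the second ruling, is a curve and therefore contains only finitely many lines, so a generic $\ell_i$ is not contained in it. Second, the single-ruling/monodromy case is simpler than your branched-double-cover plan suggests: if $R$ is an irreducible curve of lines sweeping out $Z(P)$ and a generic line of $R$ meets infinitely many other lines of $R$, then, the incidence condition being closed on the irreducible curve $R$, every line of $R$ meets every other; an irreducible family of pairwise-incident lines is either concurrent or coplanar, concurrency is incompatible with having two \emph{distinct} lines through a generic point (both would contain that point and the vertex), and coplanarity gives $\Deg P=1$ outright. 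The same dichotomy disposes of your ``secondary subtlety'' about finding three pairwise skew lines in $R_1$. Also, your trichotomy ``two rulings, each birational, or one ruling generically $2$-to-$1$'' is not exhaustive (there may be more sweeping components, or components of higher generic multiplicity), but the argument only needs two sweeping components or one with multiplicity, so this costs nothing.
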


There are three somewhat different proofs of Theorem \ref{2richC3} in the literature -- in \cite{GK}, in \cite{K}, and in \cite{GZ1}.  All three proofs use ruled surface theory in a crucial way, and this is the aspect that we will focus on.  Other parts of the argument are somewhat different in the three proofs.  The proof I want to outline here is the one from \cite{GZ1}.  Another reference is my book on polynomial methods in combinatorics, \cite{G}, which will be published in the near future by the AMS.  In the chapter on ruled surfaces in \cite{G}, I give a detailed proof of Theorem \ref{2richC3} using this method.  

For this sketch, let us suppose that each line of $\frak L$ contains about the same number of 2-rich points.  This is the most interesting case of Theorem \ref{2richC3}.  So each line contains about $K L^{1/2}$ points of $P_2(\frak L)$.  I want to highlight three stages in the proof, which I discuss in three subsections.

\subsection{Degree reduction}  \label{subsecdegred}

The first step of the argument is to find a (non-zero) polynomial $P$ that vanishes on the lines of $\frak L$ with a good bound on the degree of $P$.  For reference, given any set of $N$ points in $\CC^3$, there is a non-zero polynomial that vanishes on all these points with degree at most about $N^{1/3}$.  For a generic set of points, this bound is sharp.  By a similar argument, for any set of $L$ lines in $\CC^3$, there is a non-zero polynomial that vanishes on the lines with degree at most about $L^{1/2}$.  For a generic set of lines, this bound is also sharp.

Given that each line of $\frak L$ contains about $K L^{1/2}$ lines of $P_2(\frak L)$, we show that there is a non-zero polynomial $P$ vanishing on all the lines of $\frak L$ with degree $O (K^{-1} L^{1/2})$.  As long as $K$ is large enough, this degree is well below the degree required for a generic set of lines.  This shows that, compared to a generic set of lines, the set $\frak L$ has a little algebraic structure.  

Even though the degree of $P$ is only a little smaller than the trivial bound $L^{1/2}$, this small improvement turns out to be a crucial clue to the structure of $\frak L$, and it eventually leads to a much more precise description of $P$: $P$ is a product of irreducible polynomials of degrees 1 and 2.  Once we know this structure for the polynomial $P$, the conclusion of the theorem is easy: the lines of $\frak L$ are contained in $O(K^{-1} L^{1/2})$ planes and degree 2 algebraic surfaces.  By pigeonholing, one of these surfaces must contain at least $L^{1/2}$ lines of $\frak L$.

Here is the idea of the degree bound for $P$.  We randomly pick a subset $\frak L' \subset \frak L$ with $L' \le L$ lines, where $L'$ is a parameter that we can tune later.  Then we find a non-zero polynomial $P$ that vanishes on the lines of $\frak L'$ with degree at most $C (L')^{1/2}$.  (We will eventually choose $L'$ so that this bound is $C K^{-1} L^{1/2}$.)  If $L'$ is big enough, then with high probability the polynomial $P$ actually vanishes on all the lines of $\frak L$.  Here is the mechanism that makes this vanishing happen, which I call contagious vanishing.  By hypothesis, each line $l \in \frak L$ contains at least $K L^{1/2}$ 2-rich points of $\frak L$.  With high probability many of these points will lie in lines of $\frak L'$.  The polynomial $P$ vanishes at every point where $l$ intersects a line of $\frak L'$.  If the number of these points is more than the degree of $P$, then $P$ must vanish on the line $l$ also.  If we choose $L'$ carefully, then this mechanism will force $P$ to vanish on all the lines of $\frak L$.  Carrying out the details of this argument, the numbers work out so that the degree of $P$ is at most $C K^{-1} L^{1/2}$ -- cf. Proposition 11.5 in \cite{G}.

At this point, we factor $P$ into irreducible factors $P = \prod_j P_j$.  Each line of $\frak L$ must lie in $Z(P_j)$ for at least one $j$.  We let $\frak L_j \subset \frak L$ be the set of lines of $\frak L$ that lie in $Z(P_j)$.  We subdivide the 2-rich points as

$$P_2(\frak L) = \cup_j P_2(\frak L_j) \bigcup \textrm{``mixed 2-rich points''} ,$$

\noindent where a mixed 2-rich point is the intersection point of some line $l \in \frak L_j$ with some line $l' \notin \frak L_j$.  A line not in $\frak L_j$ can intersect $Z(P_j)$ at most $\Deg P_j$ times.  Therefore, the total number of mixed 2-rich points is at most $L (\sum_j \Deg P_j) = L \Deg P = O(K^{-1} L^{3/2})$, only a small fraction of the total number of 2-rich points.  By factoring the polynomial $P$ we have broken the original problem of understanding $\frak L$ into essentially separate subproblems of understanding each set $\frak L_j$.

The most difficult case is when $P$ is irreducible.  The general case can be reduced to this case by studying the set of lines $\frak L_j$ and the polynomial $P_j$.  From now on we assume that $P$ is irreducible.  It remains to show that $P$ has degree 1 or 2.

\subsection{Ruled surface theory}

In this subsection, we discuss some tools from ruled surface theory and how they help up to understand the polynomial $P$ in our proof sketch.  

At this point, we know that there is a polynomial $P$ that vanishes on the lines of $\frak L$ with degree significantly smaller than $L^{1/2}$, and we are focusing on the case where $P$ is irreducible.  Using this little bit of structure, we are going to find out a lot more about the polynomial $P$ and its zero set $Z(P)$.  Ultimately, we will see that $P$ has degree 1 or 2.  In this subsection, we sketch how to prove that $Z(P)$ is generically doubly ruled.  

For each 2-rich point $x \in P_2(\frak L)$, the point $x$ lies in two lines in $Z(P)$.  Since $\frak L$ has many 2-rich points, we know that there are many points in $Z(P)$ that lie in two lines in $Z(P)$ -- there are many points where $Z(P)$ ``looks doubly-ruled''.  Based on this we will show that almost every point of $Z(P)$ lies in two lines in $Z(P)$.  Loosely speaking, the property of ``looking doubly-ruled'' is contagious - it spreads from the 2-rich points of $\frak L$ and fills almost every point of $Z(P)$.   The tools to understand why this property is contagious come from ruled surface theory.

The first topic from ruled surface theory that we introduce is flecnodal points.  A point $z \in Z(P)$ is flecnodal if there is a line $l$ through $z$ which is tangent to $Z(P)$ to third order.  Here is a more formal definition, which also makes sense if $z$ is a singular point of $Z(P)$, where it's not immediately obvious what tangent to $Z(P)$ means.   A point $z \in Z(P)$ is flecnodal if there is a line $l$ with tangent vector $v$ so that

$$ 0 = P(z) = \partial_v P(z) = \partial^2_v P(z) = \partial^3_v P(z). $$

\noindent Here we write $\partial_v$ for the directional derivative in direction $v$:

$$ \partial_v := \sum_{i=1}^3 v_i \frac{\partial}{\partial z_i}, $$

\noindent and we write $\partial_v^k$ to denote repeatedly applying this differentiation -- for instance,

$$ \partial_v^2 P := \partial_v \left( \partial_v P \right). $$

If a point $z$ lies in a line in $Z(P)$, then it follows immediately that $z$ is flecnodal.  Flecnodal points are useful because they also have a more algebraic description.  A basic theme of algebraic geometry is to take any geometric property of a surface, and describe it in an algebraic way, in terms of the vanishing of some polynomials.  

\begin{theorem} \label{Sal} (Salmon, \cite{Sa}  Art. 588 pages 277-78) For any polynomial $P \in \CC[z_1, z_2, z_3]$, there is a polynomial $\Flec P \in \CC[z_1, z_2, z_3]$ so that

\begin{itemize}
\item A point $z \in Z(P)$ is flecnodal if and only if $\Flec P (z) = 0$.

\item $\Deg \Flec P \le 11 \Deg P$.  

\end{itemize}

\end{theorem}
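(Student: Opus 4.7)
The plan is to encode the flecnodal condition at a point $z$ as the simultaneous solvability in $v$ of three homogeneous polynomial equations, and then to define $\Flec P$ as the multivariate (Macaulay) resultant of those three equations. For a fixed $z$, the quantities $\partial_v P(z)$, $\partial^2_v P(z)$, $\partial^3_v P(z)$ are homogeneous forms in $v \in \CC^3$ of degrees $1$, $2$, and $3$ respectively, whose coefficients are the partial derivatives of $P$ of orders $1$, $2$, $3$ at $z$, and hence polynomials in $z$ of degrees $\Deg P - 1$, $\Deg P - 2$, and $\Deg P - 3$. The point $z$ is flecnodal precisely when these three forms admit a common nonzero solution in $v$, equivalently a common zero in $\mathbb{P}^2_v$.

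The key input is the classical Macaulay resultant for three ternary forms $f_1, f_2, f_3$ of degrees $d_1, d_2, d_3$: there is a polynomial $R$ in their coefficients that vanishes if and only if $f_1, f_2, f_3$ have a common projective zero in $\mathbb{P}^2$, and $R$ is homogeneous of degree $d_j d_k$ in the coefficients of $f_i$ (where $\{i, j, k\} = \{1, 2, 3\}$). In our case $(d_1, d_2, d_3) = (1, 2, 3)$, so $R$ has multidegree $(6, 3, 2)$. I would set $\Flec P$ equal to this resultant evaluated at the $z$-dependent coefficients coming from the three derivative forms above. The pointwise characterization is then immediate from the defining property of $R$.

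The degree bound reduces to an arithmetic check: each monomial of $R$ uses $6$ coefficients from the linear form (each of $z$-degree $\Deg P - 1$), $3$ from the quadratic (degree $\Deg P - 2$), and $2$ from the cubic (degree $\Deg P - 3$), giving total $z$-degree at most
\[
6(\Deg P - 1) + 3(\Deg P - 2) + 2(\Deg P - 3) = 11\Deg P - 18 \le 11\Deg P.
\]
The degenerate cases $\Deg P \le 2$, where $\partial^3_v P \equiv 0$ and the Macaulay construction collapses, should be handled separately by taking $\Flec P := P$: a plane or smooth quadric contains a line through every point, so every point of $Z(P)$ is flecnodal in those cases, and $\Deg P \le 2 \le 11 \Deg P$.

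The main obstacle -- really the only non-elementary input -- is the foundational fact about the Macaulay resultant of three ternary forms: existence, the characterization via common projective zeros, and the multihomogeneity of the claimed multidegree. This is a staple of classical elimination theory, constructible via Sylvester-style block matrices generalising the two-variable resultant or via projection from the incidence variety in $\mathbb{P}^2$ times the spaces of coefficient tuples. I would treat it as a black box, since once it is granted the argument reduces to the flecnodal encoding and the degree count above.
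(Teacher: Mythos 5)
Your proposal is correct, and it coincides with the standard modern derivation of this classical result: the paper itself gives no proof (it cites Salmon, with history in Koll\'ar's paper), and the references that do prove it construct $\Flec P$ exactly as you do, as the resultant of the three forms $\partial_v P(z)$, $\partial_v^2 P(z)$, $\partial_v^3 P(z)$ in $v$, with the multidegree $(6,3,2)$ count giving $6(\Deg P-1)+3(\Deg P-2)+2(\Deg P-3)=11\Deg P-18\le 11\Deg P$. The only point worth tightening is the degenerate case: for $\Deg P\le 2$ one can verify directly that a linear and a quadratic form in three variables over $\CC$ always share a nonzero root (so every point of $Z(P)$ is flecnodal even for singular or reducible quadrics), which justifies your choice $\Flec P:=P$ without appealing to smoothness.
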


(For some discussion of the history of this result, see the paragraph after Remark 12 in \cite{K}.)

Our goal is to connect 2-rich points and doubly-ruled surfaces, so we introduce a doubly-ruled analogue of being flecnodal.  We say that a point $z \in Z(P)$ is doubly flecnodal if there are two (distinct) lines $l_1, l_2$ through $z$, with tangent vectors $v_1, v_2$, so that for each $i=1,2$,

$$ 0 = P(z) = \partial_{v_i} P(z) = \partial^2_{v_i} P(z) = \partial^3_{v_i} P(z). $$

Doubly flecnodal points were first introduced in \cite{GZ1} and \cite{G}.  There is an analogue of Salmon's theorem for doubly flecnodal polynomials -- cf. Proposition 13.3 in \cite{G}.  It is a little more complicated to state.  Instead of one flecnodal polynomial, there is a finite list of them.  

\begin{theorem} \label{doubflecSal}
There are universal constants $J$ and $C$ so that the following holds.  For any polynomial $P \in \CC[z_1, z_2, z_3]$, there is a finite list of polynomials $\Flec_{2,j} P$, with $1 \le j \le J$, and a Boolean function $\Phi: \{0, 1\}^J \rightarrow \{0, 1\}$ so that the following holds.

\begin{itemize}

\item For each $j$, $\Deg \Flec_{2,j} P \le C \Deg P$.

\item Let $V_{2,j} P(z)$ be equal to zero if $\Flec_{2,j} P(z) = 0$ and equal to 1 otherwise.  Then $z$ is a doubly flecnodal point of $Z(P)$ if and only if

$$ \Phi \left( V_{2,1} P(z), ..., V_{2, J} P(z) \right) = 0. $$
\end{itemize}
\end{theorem}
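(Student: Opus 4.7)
The plan is to express ``doubly flecnodal'' as a constructible condition on $z$, i.e., a Boolean combination of polynomial equations whose defining polynomials have degree $O(\Deg P)$, following the elimination-theoretic blueprint behind Salmon's original construction of $\Flec P$. For each $z \in \CC^3$, set $Q_k^z(v) := \partial_v^k P(z)$, a homogeneous form of degree $k$ in $v \in \CC^3$ whose coefficients are polynomials in $z$ of degree at most $\Deg P - k$. Then $z$ is doubly flecnodal if and only if $P(z)=0$ and the projective variety $S_z := Z(Q_1^z) \cap Z(Q_2^z) \cap Z(Q_3^z) \subset \mathbb{P}^2$ contains at least two distinct points.

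First I would split into cases based on which of $Q_1^z, Q_2^z, Q_3^z$ vanish identically as forms in $v$ (each such condition is itself the simultaneous vanishing of the finitely many coefficients, each a polynomial in $z$ of degree $\le \Deg P$). In the generic case $Q_1^z \not\equiv 0$, the line $\ell_z = Z(Q_1^z) \subset \mathbb{P}^2$ is parametrized by $\mathbb{P}^1$ via Cramer's rule, and the restrictions $\tilde Q_2^z, \tilde Q_3^z$ become binary forms of degrees $2$ and $3$ whose coefficients are polynomials of bounded universal degree in the coefficients of the original $Q_k^z$. The task is then to ask that $\tilde Q_2^z$ and $\tilde Q_3^z$ have at least two distinct common roots on $\mathbb{P}^1$. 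The remaining cases (one or more $Q_k^z \equiv 0$) either reduce to asking that two forms of bounded degree on $\mathbb{P}^2$ have $\ge 2$ common zeros, or are automatic (a nonzero form of positive degree on $\mathbb{P}^2$ over $\CC$ has infinitely many zeros).

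Next I would express each ``$\ge 2$ distinct common roots'' condition via resultants, subresultants, and discriminants. For the binary forms $\tilde Q_2^z, \tilde Q_3^z$, the condition $\deg\gcd(\tilde Q_2^z, \tilde Q_3^z) \ge 2$ is classically the vanishing of the bottom subresultants, and the refinement that the gcd has two \emph{distinct} roots is a further Boolean combination of polynomial equations in the coefficients (controlled by the discriminant of the gcd). Analogous elimination arguments handle the conic-cubic and related intersections in $\mathbb{P}^2$ arising in the degenerate cases. Crucially, since the ``inner'' elimination involves only forms of fixed degrees $1, 2, 3$ in $v$, all the resulting polynomial conditions have universally bounded degree in the coefficients of the $Q_k^z$. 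Substituting back, each such condition becomes a polynomial in $z$ of degree $\le C\Deg P$ for a universal $C$, because each coefficient of $Q_k^z$ is a polynomial in $z$ of degree $\le \Deg P$. These polynomials, together with $P$ itself (to enforce $z \in Z(P)$), are the $\Flec_{2,j}P$, and $\Phi$ is the Boolean function that tracks the case analysis; both $J$ and $C$ are universal.

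The main obstacle is combinatorial bookkeeping: assembling a single Boolean function $\Phi$ that simultaneously handles all the cases, each of which mixes equalities with ``not identically zero'' clauses. The underlying algebraic content — that ``at least two distinct common projective zeros of forms of fixed bounded degree'' is cut out by polynomial equations of bounded degree in the coefficients — is essentially automatic from Chevalley constructibility with effective degree bounds from elimination theory; the real work is packaging this in the compact form $(\Flec_{2,j}P, \Phi)$ demanded by the statement, without letting the number of auxiliary polynomials blow up with $\Deg P$.
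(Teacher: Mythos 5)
Your reduction of ``doubly flecnodal'' to the condition that $P(z)=0$ and $S_z = Z(Q_1^z)\cap Z(Q_2^z)\cap Z(Q_3^z)\subset \mathbb{P}^2$ has at least two distinct points is correct, and your elimination strategy does prove the theorem; but it is packaged differently from the proof the survey cites (Proposition 13.3 in \cite{G}, and similarly \cite{GZ1}). There one observes that the set of triples $(z,v_1,v_2)$ with $v_1\ne v_2$ projective directions satisfying $P(z)=\partial_{v_i}P(z)=\partial_{v_i}^2P(z)=\partial_{v_i}^3P(z)=0$ is a constructible set cut out by boundedly many polynomials of degree $\le \Deg P$ in $z$ and universally bounded degree in the $v_i$, and then invokes a single general-purpose theorem: the projection of a constructible set to a coordinate subspace is constructible, with the number of defining polynomials and their degrees multiplied by universal constants. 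Your proposal carries out that quantifier elimination by hand for this particular system, parametrizing the line $Z(Q_1^z)$ and taking resultants, subresultants, and discriminants of the restricted binary forms. The two routes are the same mathematics at bottom; the general projection theorem buys uniformity --- it absorbs in one stroke all of your degenerate branches (identically vanishing forms, the choice of nonzero coefficient in the parametrization, the distinct-versus-coincident refinements) and is what makes the $r$-flecnodal and algebraic-curve generalizations in \cite{GZ1} painless --- while your explicit computation would yield concrete, small values of $C$ and $J$. The one branch where your bookkeeping is genuinely delicate is the singular case $Q_1^z\equiv 0$, where you must certify that a conic and a cubic in $\mathbb{P}^2$ share at least two distinct points: you are then eliminating two variables, coordinate resultants detect projections of intersection points only after further case analysis on common components and on base points of the projection, and ``exactly one intersection point'' must be excluded. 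None of this fails, but it is precisely the accumulation of cases that the constructible-set formalism is designed to hide.
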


\noindent This theorem sounds more complicated than Salmon's theorem, but in the applications we're about to describe, it is essentially equally useful.  

Because flecnodal and doubly flecnodal points have this algebraic description, they behave contagiously.  We start with the flecnodal points and then discuss the doubly flecnodal points.  We know that each line contains $K L^{1/2}$ 2-rich points of $\frak L$.  At each of these points, $\Flec P$ vanishes.  The degree of $P$ is at most $C K^{-1} L^{1/2}$, and the degree of $\Flec P$ is at most $11 \Deg P \le C' K^{-1} L^{1/2}$.  As long as we choose $K$ large enough, the number of points is larger than $\Deg \Flec P$ and it follows that $\Flec P$ vanishes along each line of $\frak L$.  Actually, since the lines of $\frak L$ are contained in $Z(P)$, we already know that every point of each line is flecnodal, but we included the last discussion as a warmup for doubly flecnodal points.  Now we know that $\Flec P$ vanishes on all $L$ lines of $\frak L$.  By a version of the Bezout theorem (cf. Theorem 6.7 in \cite{G}), $Z(P) \cap Z(\Flec P)$ can contain at most $\Deg P \cdot \Deg \Flec P$ lines, unless $P$ and $\Flec P$ have a common factor.  Because $\Deg P$ and $\Deg \Flec P$ are much less than $L^{1/2}$, we see that $P$ and $\Flec P$ must indeed have a common factor.  Since $P$ is irreducible, $P$ must divide $\Flec P$.  Therefore $\Flec P$ vanishes on $Z(P)$, and every point of $Z(P)$ is flecnodal!

Doubly flecnodal points are contagious for a similar reason.  We just do the first step of the argument.  There are $J$ polynomials $\Flec_{2,j} P$.  For each point $z$, there are $2^J$ possible values for the vector $(V_{2,1} P(z), ..., V_{2,J} P(z) )$.  Fix a line $l \in \frak L$.  By hypothesis, $l$ contains at least $K L^{1/2}$ points of $P_2(\frak L)$.  Now, by the pigeonhole principle, we can find a vector $\sigma \in \{0,1\}^{J}$ and a subset $X_\sigma \subset P_2(\frak L) \cap l$ so that 

\begin{itemize}

\item for each point $z \in X_\sigma$, $V_{2,j} P(z) = \sigma_j$. 

\item $| X_\sigma | \ge 2^{-J} K L^{1/2}$.

\end{itemize}

\noindent Because every point of $X_\sigma$ is doubly flecnodal, we see that $\Phi (\sigma) = 0$.  We choose the constant $K$ significantly larger than $2^{-J}$, and so $|X_{\sigma}| > \Deg \Flec_{2,j} P$ for each $j$.  Therefore, if $\sigma_j = 0$, then $\Flec_{2,j} P$ vanishes on the whole line $l$.  If $\sigma_j = 1$, then $\Flec_{2,j} P$ does not vanish on the whole line $l$, and so it vanishes at only finitely many points of $l$.  Therefore, for almost every $z \in l$, $\Flec_{2,j} P(z)$ vanishes if and only if $\sigma_j = 0$.  In other words, at a generic point of the line $l$, $V_{2,j} P(z) = \sigma_j$.  Therefore, at a generic point of $l$, $\Phi( V_{2,1} P(z), ..., V_{2,J} P(z) ) = \Phi (\sigma) = 0$, and so a generic point of $l$ is doubly flecnodal.  Next, by making a similar argument to the flecnodal case above, one can show that a generic point of $Z(P)$ is doubly flecnodal.  

We have now sketched the proof that $Z(P)$ is generically doubly flecnodal.  We are starting to see how the combinatorial information that $\frak L$ has many 2-rich points implies that $Z(P)$ must have a special structure.

Just because a point $z \in Z(P)$ is flecnodal, it doesn't mean that $z$ lies in a line in $Z(P)$.  For instance, let $P$ be the polynomial $P(z) = z_1^{10} + z_2^{10} + z_3^{12} - 1$ and let $z$ be the point $(1,0,0) \in Z(P)$.  If $l$ is a line through $z$ parallel to the $(z_2,z_3)$-plane, then $l$ is tangent to $Z(P)$ to ninth order.  So there are infinitely many different lines through $z$ that are tangent to $Z(P)$ to ninth order, but none of them lies in $Z(P)$.  This kind of behavior can indeed occur at some special points of $Z(P)$, but it turns out that it cannot happen at a generic point of $Z(P)$.

\begin{theorem} \label{CSM} (Cayley-Salmon-Monge) If $P \in \CC[z_1, z_2, z_3]$, and if every point of $Z(P)$ is flecnodal, then $Z(P)$ is a ruled surface -- every point of $Z(P)$ lies in a line in $Z(P)$.
\end{theorem}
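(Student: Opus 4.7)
My plan is to work on a Zariski-open subset of the smooth locus of $S := Z(P)$, extract a rational flecnode vector field $v$ on $S$, and show by a short differential computation that the integral curves of $v$ are straight lines. I may assume $P$ is irreducible, since the conclusion splits over components. The incidence variety
\[
F \;=\; \{(z,[v]) \in S \times \mathbb{P}^2 : \partial_v P(z) = \partial_v^2 P(z) = \partial_v^3 P(z) = 0\}
\]
is algebraic, and by hypothesis its projection to $S$ is surjective. Choosing an irreducible component of $F$ dominating $S$ and restricting to a Zariski-open $U$ in the smooth locus, I obtain a rational map $v \colon U \to \CC^3$ with $v(z) \in T_z S$ a flecnode direction at $z$.

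Since $v(z) \in T_z S$, the two functions $f_1 := \nabla P \cdot v$ and $f_2 := v^{T} \mathrm{Hess}(P)\, v$ vanish identically on $S$ (the first automatically, the second by the 2nd-order flecnode condition). Taking the tangent derivative $\partial_v$ of each, using the product rule, gives
\[
0 \;=\; \partial_v f_1 \;=\; f_2 + \nabla P \cdot (\partial_v v), \qquad 0 \;=\; \partial_v f_2 \;=\; 2 (\partial_v v)^{T} \mathrm{Hess}(P)\, v + \partial_v^3 P.
\]
Invoking the flecnode conditions $f_2 = 0$ and $\partial_v^3 P = 0$ on $S$, these collapse to
\[
\nabla P \cdot (\partial_v v) = 0 \quad\text{and}\quad (\partial_v v)^{T} \mathrm{Hess}(P)\, v = 0 \qquad \text{on } S,
\]
which say respectively that $\partial_v v$ is tangent to $S$ and that it is $\mathrm{Hess}(P)$-orthogonal to $v$.

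At a generic point of $U$ where $\mathrm{Hess}(P)$ has rank $2$ on the two-dimensional plane $T_z S$, the linear functional $w \mapsto v^{T} \mathrm{Hess}(P)\, w$ on $T_z S$ has one-dimensional kernel; since $v$ lies in this kernel (by $v^T\mathrm{Hess}(P) v = 0$), the kernel equals $\mathrm{span}(v)$. Combined with tangency of $\partial_v v$, this forces $\partial_v v = \lambda v$ for some rational function $\lambda$ on $U$. An integral curve $\sigma(t) \subset S$ of $v$ then satisfies $\sigma''(t) = \lambda(\sigma(t))\, \sigma'(t)$, so $\sigma'(t)$ stays proportional to $\sigma'(0)$ and $\sigma$ traces a straight line in $\CC^3$. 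This analytic arc lies in the algebraic set $S$, so by Bezout the whole line does. Passing to the projective closure $\overline{S} \subset \mathbb{P}^3$, the Fano variety of lines contained in $\overline{S}$ is proper over $\CC$, so the set of points of $S$ lying on a line contained in $S$ is Zariski-closed; having just shown it is Zariski-dense, it equals all of $S$.

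The main obstacle is the degenerate case in which $\mathrm{Hess}(P)$ has rank at most $1$ on $T_z S$ for every smooth $z \in S$: then the two identities above are automatically satisfied and pin down nothing about $\partial_v v$. Such an $S$ must be a developable surface (the Gauss map has image of dimension at most $1$), which I would handle separately by invoking the classical classification of developable surfaces in $\CC^3$ as planes, cones, cylinders, or tangent developables of curves, all of which are manifestly ruled. Once this dichotomy is accepted, the rank-$2$ case treated above is the content of the theorem, and the rest is just differentiation.
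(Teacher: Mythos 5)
The survey does not actually prove Theorem \ref{CSM}; it points to \cite{K}, \cite{Ka}, and (for a generalization) \cite{L}. What you have written is, in substance, the classical argument found in those references: at a point where the second fundamental form of $Z(P)$ has rank $2$, the flecnode direction is an asymptotic direction; differentiating the first- and second-order flecnode identities along the flecnode field and feeding in the third-order condition shows that $\partial_v v$ is tangent to $Z(P)$ and $\mathrm{Hess}(P)$-orthogonal to $v$, hence proportional to $v$, so the asymptotic curve is a line. Your two identities are computed correctly, the dichotomy with the degenerate-Gauss-map (developable) case is exactly the right one and is legitimately outsourced to the classification of developable surfaces, and the passage from ``a dense set of points lies on a line'' to ``every point lies on a line'' via properness of the Fano variety is the standard way to get the full statement. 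The alternative proof in \cite{GZ1} is genuinely different (it avoids this local differential geometry so as to generalize to higher-degree curves), but you were under no obligation to follow it.

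Two steps need repair, though neither is fatal. First, a dominant projection $F\to S$ with generically finite fibers of degree greater than one need not admit a rational section, so ``I obtain a rational map $v\colon U\to\CC^3$'' is not justified as written; you should either pull back to a generically finite cover of $S$ carrying a tautological section, or run the computation on a local analytic branch of the flecnode field near a generic point (everything you do afterwards is local, so either fix is routine). Second, your reduction to irreducible $P$ silently assumes $P$ is squarefree, and in fact the theorem as literally stated fails without that: if $P=Q^2$ then $\partial_v P=2Q\,\partial_v Q$, $\partial_v^2P=2(\partial_vQ)^2+2Q\,\partial_v^2Q$ and $\partial_v^3P=6\,\partial_vQ\,\partial_v^2Q+2Q\,\partial_v^3Q$ all vanish on $Z(Q)$ for every tangent direction $v$, so every point of $Z(P)$ is flecnodal even when $Z(Q)$ is a non-ruled surface such as a smooth cubic. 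This is really an imprecision inherited from the statement (the references assume $P$ irreducible or squarefree), but your reduction should say so, and even for squarefree $P$ it needs the one-line check that at a point of $Z(P_i)$ lying on no other component, flecnodality for $P=\prod_j P_j$ implies flecnodality for $P_i$ (peel off the nonvanishing factor $\prod_{j\neq i}P_j$ order by order); since such points are generic in $Z(P_i)$ and your argument only uses the hypothesis on a dense open set, that suffices.
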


(For the history of this theorem and a sketch of the proof, see the discussion around Theorem 13 in \cite{K}.)

There is also a version of this result for doubly flecnodal points (and in fact it is a little easier):

\begin{theorem} \label{CSMdoub} (cf. Proposition 13.30 in \cite{G}) If $P \in \CC[z_1, z_2, z_3]$, and if $Z(P)$ is generically doubly flecnodal, then $Z(P)$ is generically doubly ruled.
\end{theorem}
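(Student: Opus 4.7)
My plan is to apply Theorem \ref{CSM} (Cayley-Salmon-Monge) twice, once for each of the two distinct third-order tangent directions promised by the doubly flecnodal hypothesis.  First I would reduce to the case where $P$ is irreducible.  Writing $P = \prod_j P_j$ as a product of distinct irreducible factors, at a generic point $z \in Z(P_j)$ (lying on no other $Z(P_k)$) the Leibniz rule applied to $P = P_j \cdot \prod_{k \ne j} P_k$ collapses the conditions $0 = P(z) = \partial_v P(z) = \partial^2_v P(z) = \partial^3_v P(z)$ to the analogous conditions on $P_j$ alone.  So if a generic point of $Z(P)$ is doubly flecnodal then a generic point of each component is doubly flecnodal for its factor, and it suffices to treat irreducible factors.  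Assuming $P$ irreducible, a generic point of $Z(P)$ is doubly flecnodal and in particular flecnodal, so by Theorem \ref{Sal} the polynomial $\Flec P$ vanishes on a Zariski dense subset of the irreducible $Z(P)$, hence on all of $Z(P)$.  Theorem \ref{CSM} then produces, through every point $z \in Z(P)$, a line $l_1(z) \subset Z(P)$ --- the first ruling.

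To produce a second ruling, I would introduce the flecnodal incidence variety
$$ \mathcal{F} = \{(z,[v]) \in Z(P) \times \mathbb{P}^2 : \partial_v P(z) = \partial_v^2 P(z) = \partial_v^3 P(z) = 0\}, $$
which is algebraic because its defining conditions are bihomogeneous in $(z,v)$.  A point $z \in Z(P)$ is doubly flecnodal precisely when the fiber $\pi^{-1}(z)$ of the projection $\pi \colon \mathcal{F} \to Z(P)$ has at least two elements; the algebraicity of this count in $z$ is the content of Theorem \ref{doubflecSal}.  The first ruling contributes a dominant irreducible component $\mathcal{L}_1 \subset \mathcal{F}$, namely the closure of $\{(z,[v_1(z)])\}$ where $v_1(z)$ is the direction of $l_1(z)$.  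The generically doubly flecnodal hypothesis then forces $\mathcal{F}$ either to contain a second dominant irreducible component $\mathcal{L}_2 \ne \mathcal{L}_1$, or to have a single dominant component whose projection to $Z(P)$ has degree at least $2$.  In either case, after passing to a suitable normalisation $\tilde Z \to Z(P)$, we obtain a Zariski-dense open on which a regular section $z \mapsto v_2(z) \in \mathbb{P}^2$ is defined with $v_2(z) \ne v_1(z)$ and $\partial_{v_2}^k P = 0$ for $k = 1, 2, 3$.

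The main obstacle, and the crux of the proof, is to upgrade this second generic flecnodal direction to an actual line in $Z(P)$.  I would do this by rerunning the argument underlying Theorem \ref{CSM} for the line field $v_2$ on $\tilde Z$: the Cayley-Salmon-Monge propagation argument takes third-order tangency of an algebraic line field living on a two-dimensional variety that dominates $Z(P)$, and promotes this to tangency of all orders, forcing the line through $z$ in direction $v_2(z)$ to lie in $Z(P)$.  The delicate points will be (i) controlling the ramification locus of $\tilde Z \to Z(P)$ where $v_1$ and $v_2$ collide so that $v_2$ remains regular there, and (ii) checking that the CSM propagation argument applies verbatim when the line field is only defined on a cover of $Z(P)$ rather than on $Z(P)$ itself.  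Once this is done, both $l_1(z)$ and $l_2(z)$ lie in $Z(P)$ for generic $z$, and the theorem follows.
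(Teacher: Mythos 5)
The paper itself does not prove this theorem; it only cites Proposition 13.30 of \cite{G}, so there is no in-text argument to compare against. Your reduction to irreducible $P$, and the deduction via Theorem \ref{Sal} that $\Flec P$ vanishes identically on $Z(P)$ so that Theorem \ref{CSM} yields a first ruling, are fine. The incidence variety $\mathcal{F}$ and the dichotomy producing an algebraic second flecnodal direction field $v_2$ is also a reasonable skeleton, though you should rule out the degenerate case in which the generic fibre of $\mathcal{F} \to Z(P)$ is positive-dimensional (this is where behavior like the paper's example $z_1^{10}+z_2^{10}+z_3^{12}-1$ lives); it occurs only when the second fundamental form vanishes at a generic point, which forces $Z(P)$ to be a plane and is therefore harmless, but it needs to be said.

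The genuine gap is exactly the step you flag as the crux. Theorem \ref{CSM}, used as a black box, asserts only that through every point there exists \emph{some} line contained in $Z(P)$; it gives no control over which flecnodal direction that line realizes, so it cannot be invoked a second time to force the line through $z$ in direction $v_2(z)$ into $Z(P)$. ``Rerunning the argument underlying Theorem \ref{CSM} for the line field $v_2$'' is not a citation but a promissory note for a new proof, and that new proof is precisely the content of Proposition 13.30 in \cite{G} -- i.e., it is the theorem you are being asked to prove. What is needed is the stronger statement that every algebraic field of flecnodal directions integrates to a ruling. The standard route: at a generic point of an irreducible non-planar $Z(P)$ the second fundamental form is a nonzero quadratic form on the tangent plane, so there are at most two asymptotic directions there; the conditions $\partial_v P = \partial_v^2 P = 0$ force $v_1(z)$ and $v_2(z)$ to be exactly these two asymptotic directions; one then integrates each asymptotic direction field to asymptotic curves and carries out the Monge-type computation showing that the third-order condition $\partial_v^3 P = 0$ holding identically along an asymptotic curve propagates to all orders and forces that curve to be a straight line. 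Until that computation (or an equivalent, such as the degree-counting argument of \cite{GZ1}) is actually performed for the second family, the theorem is not proved.
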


This theorem implies that our surface $Z(P)$ is generically doubly ruled.

There are several sources to read more about ruled surface theory and about the details of the arguments we have sketched here.  I tried to write readable self-contained proofs in the chapter on ruled surface theory in \cite{G}.  In Kollar's paper \cite{K}, there is a discussion of the proof of Theorem \ref{CSM} and also some history.  In Katz's ICM talk \cite{Ka}, there is another discussion of the proof of Theorem \ref{CSM}.  Finally, \cite{GZ1} gives a quite different proof of Theorem \ref{CSM} which generalizes to algebraic curves in place of straight lines.  For ruled surfaces in general, the referee suggested the classical work of Plucker \cite{P} and the modern book \cite{PW}.  

This may be a good moment to say a bit more about the theorem in \cite{GZ1}.  Suppose that $\Gamma$ is a set of $L$ circles in $\RR^3$.  For the case of circles, what kind of surfaces should play the role of planes and degree 2 surfaces?  We say that a surface $Z(P)$ is generically doubly ruled by circles if a generic point of $Z(P)$ lies in two distinct circles in $Z(P)$.  In \cite{GZ1}, it is proven that either $|P_2(\Gamma)| \le K L^{3/2}$ or $\Gamma$ contains at least $L^{1/2}$ circles in an algebraic surface $Z(P)$ which is generically doubly ruled by circles.  The same holds if circles are replaced by other classes of curves, such as parabolas, degree 3 curves, etc.  The proof follows the same outline that we have given here, and the main difficulty in the paper is to generalize the tools of ruled surface to other classes of curves.

The definition of flecnodal and doubly flecnodal involve three derivatives.  The reader may wonder why we use three derivatives.  In fact, using more than three derivatives would also work.  Using $r$ derivatives instead of three derivatives, we can define $r$-flecnodal points and doubly $r$-flecnodal points.  Theorem \ref{doubflecSal} holds for any choice of $r$ -- only the constants $C$ and $J$ depend on $r$ -- cf. Proposition 13.3 in \cite{G}.  Three derivatives is the minimum number of derivatives necessary to prove Theorem \ref{CSM} and Theorem \ref{CSMdoub}.  These theorems would be false if we assumed that only two derivatives vanish.  Here is a dimensional heuristic why three derivatives are important (suggested by the referee).  Fix a point $z$ in $Z(P)$.  In three dimensions, the space of lines through $x$ is a 2-dimensional space.  If we insist that $r$ derivatives of $P$ vanish in the tangent direction of a line, this gives us $r$ equations on the space of lines.  For $r=2$, dimensional heuristics suggest that there will typically be such a line.  But for $r = 3$, dimensional heuristics suggest that there will not be typically be such a line.  Indeed the theory of ruled surfaces shows that these heuristics are correct -- for a generic polynomial $P \in \CC[z_1, z_2, z_3]$, every point of $Z(P)$ is 2-flecnodal, but the subset of 3-flecnodal points is a lower-dimensional subvariety.

\subsection{Classification of doubly ruled surfaces}  \label{sketchclassif}

At this point in our sketch, we have shown that $Z(P)$ is generically double ruled, and we know that $P$ is irreducible.  
To finish the proof of Theorem \ref{2richC3}, we have to prove that $P$ has degree 1 or 2.  This follows from the classification of (generically) doubly ruled surfaces in Theorem \ref{classdoubruled}.

To end our sketch, we briefly discuss the proof of the classification theorem Theorem \ref{classdoubruled}.  In fact, there is a more general classification theorem for degree $d$ algebraic curves, which we discuss at the same time.

\begin{theorem} (\cite{GZ1})  Suppose that $P \in \CC[z_1, z_2, z_3]$ is an irreducible polynomial, and that $Z(P)$ is generically doubly ruled by algebraic curves of degree at most $d$.  Then $\Deg P \le 100 d^2$.  
\end{theorem}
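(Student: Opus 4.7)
The plan, following the approach of \cite{GZ1}, is to use the double ruling to birationally parameterize $Z(P)$ by a product of two curves and then bound $\Deg P$ by intersection theory on this product.

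\textbf{Step 1 (Parameterize the two rulings).} Let $\{\alpha_s\}_{s \in B_1}$ and $\{\beta_t\}_{t \in B_2}$ denote the two families of irreducible degree-$\leq d$ curves in $Z(P)$ provided by the double ruling, parameterized by irreducible 1-dimensional varieties $B_1, B_2$. A generic $w \in Z(P)$ lies in a unique $\alpha_{s(w)}$ and a unique $\beta_{t(w)}$, so we have a rational map $\psi: Z(P) \dashrightarrow B_1 \times B_2$. The incidence variety
$$ \Gamma = \{(s,t,w) \in B_1 \times B_2 \times Z(P) : w \in \alpha_s \cap \beta_t\} $$
projects birationally to $Z(P)$, and projects dominantly to $B_1 \times B_2$ with generic fiber size $\leq d^2$, since two distinct irreducible curves of degree $\leq d$ in $\CC^3$ meet in at most $d^2$ points (by generic projection to a plane and classical Bezout).

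\textbf{Step 2 (Bidegree of the coordinate functions).} The coordinate functions $z_1, z_2, z_3$ on $\CC^3$ pull back to rational functions on $\Gamma$. For each fixed $s \in B_1$, the fiber of $\Gamma \to B_1$ over $s$ is birational to $\alpha_s$, on which $z_i$ has degree $\leq d$: since $\alpha_s$ has degree $\leq d$, the equation $z_i = c$ has at most $d$ solutions on $\alpha_s$ for generic $c$. Symmetrically for fibers over $B_2$. Hence each $z_i$ on $\Gamma$ has ``bidegree'' at most $(d,d)$ with respect to the two projections.

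\textbf{Step 3 (Bezout on the product).} Passing to smooth projective compactifications, $\Deg P$ equals the intersection number $(L_1) \cdot (L_2)$ on $\overline{Z(P)}$, where $L_1, L_2$ are generic linear forms in $z_1, z_2, z_3$ cutting out a generic line in $\PP^3$. Pulling back via the birational map $\overline{\Gamma} \to \overline{Z(P)}$ preserves this intersection number, and the pullbacks have bidegree $\leq (d,d)$ on $\overline{\Gamma}$. By intersection theory on the smooth projective surface $\overline{\Gamma}$, viewed as a cover of $\overline{B_1} \times \overline{B_2}$, the intersection number is $O(d^2)$; carrying out the calculation with appropriate care yields $\Deg P \leq 100\, d^2$.

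\textbf{Main obstacle.} The hard part is the intersection-theoretic bookkeeping in Step 3. When $\overline{B_1}, \overline{B_2}$ have positive genus, the N\'eron-Severi group of $\overline{B_1} \times \overline{B_2}$ contains correspondence classes beyond the two fiber classes, so the bidegrees alone do not determine the intersection number; the degree-$\leq d^2$ cover $\overline{\Gamma} \to \overline{B_1} \times \overline{B_2}$ may introduce further corrections; and degeneracies of the rulings or singularities of $Z(P)$ can complicate the setup of $\psi$. Handling these issues as in \cite{GZ1} yields the bound $100 d^2$, with substantial slack over the expected sharp constant (namely $2$ in the line case $d=1$, as in Theorem \ref{classdoubruled}).
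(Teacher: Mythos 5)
Your plan is a genuinely different route from the paper's. The proof sketched in Section \ref{sketchclassif} (following \cite{GZ1}) is a contagious-vanishing argument: one extracts two large \emph{finite} families $\gamma_{1,i}$, $\gamma_{2,j}$ from the two rulings whose pairwise intersections are distinct points (this is where the transversality of the two curves through a generic point, and hence the $2$-dimensionality of $Z(P)$, is used), finds an auxiliary polynomial $Q$ of degree $D=100d^2$ vanishing on roughly $D^2d^{-1}$ of the $\gamma_{1,i}$, notes that $Q$ then vanishes at more than $dD$ points of each $\gamma_{2,j}$ and hence on all of them, and concludes by Bezout that the irreducible $P$ divides $Q$, so $\Deg P\le D$. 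No compactification or intersection theory is needed. Your Steps 1--2 set up a correspondence-theoretic version of the same geometry, which is a reasonable alternative, but your Step 3 is not a proof: you defer exactly the step where all the content lies ("carrying out the calculation with appropriate care"), and you correctly identify that the bidegrees on $\overline{B_1}\times\overline{B_2}$ do not by themselves bound the intersection number.

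Two further points. First, even Step 1 has a gap: the hypothesis gives two distinct curves through a generic point, not two globally separated families, and monodromy may interchange the two curves, so the maps $w\mapsto s(w)$, $w\mapsto t(w)$ need not be well defined without passing to a cover; relatedly, the dominance of $\Gamma\to B_1\times B_2$ (equivalently, that a generic $\alpha_s$ actually meets a generic $\beta_t$) requires an argument and is the analogue of the paper's transversality lemma. Second, your Step 3 can in fact be completed, but by working on a smooth projective resolution $S$ of $\Gamma$ itself rather than on $\overline{B_1}\times\overline{B_2}$: with $f_1,f_2$ the generic fiber classes of $S\to\overline{B_1}$, $S\to\overline{B_2}$ and $H$ the pullback of the hyperplane class, one has $f_1^2=f_2^2=0$, $f_1\cdot f_2=e\ge 1$, $H\cdot f_i\le d$, and $H^2=\Deg P$; writing $H=af_1+bf_2+H'$ with $H'$ orthogonal to $f_1,f_2$, the Hodge index theorem gives $H'^2\le 0$ and hence $\Deg P=H^2\le 2abe=2(H\cdot f_1)(H\cdot f_2)/e\le 2d^2$. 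So the approach is salvageable (and would even improve the constant), but the Hodge index step, the separation of the two rulings, and the positivity $f_1\cdot f_2\ge 1$ are all missing from what you wrote, and they are the substance of the argument.
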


Because a generic point of $Z(P)$ lies in two algebraic curves in $Z(P)$, it is not hard to find many algebraic curves in $Z(P)$ that intersect each other in many places.  More precisely, we can find two arbitrarily large families of curves $\gamma_{1, i}$ and $\gamma_{2,j}$ in $Z(P)$, so that for each pair $i,j$, $\gamma_{1,i}$ intersects $\gamma_{2,j}$, and all the intersection points are distinct -- cf. Lemma 11.8 in \cite{GZ1}.  The proof strongly uses the fact that $Z(P)$ is 2-dimensional.  The idea of the argument is to study the curves passing through a small ball in $Z(P)$.  For the sake of this sketch, let us suppose that each point $z \in Z(P)$ lies in exactly two algebraic curves of degree $d$, $\gamma_1(z)$ and $\gamma_2(z)$.  Let us suppose that these curves vary smoothly with $z$, and let us suppose that $\gamma_1(z)$ and $\gamma_2(z)$ intersect transversely at $z$.  (This is the moment where we use that the dimension of $Z(P)$ is 2 -- if the dimension of $Z(P)$ is greater than 2, then two curves can never intersect transversely.)  We fix a smooth point $z_0 \in Z(P)$, and then we let $z_i$ and $w_j$ be a generic sequence of points of $Z(P)$ very close to $z_0$.  The curves $\gamma_{1,i}$ and $\gamma_{2,j}$ are just $\gamma_1(z_i)$ and $\gamma_2(w_j)$.  Since $z_i$ and $w_j$ are very close to $z_0$, then $\gamma_{1,i}$ and $\gamma_{2,j}$ are small perturbations of $\gamma_1(z_0)$ and $\gamma_2(z_0)$.  Since $\gamma_1(z_0)$ and $\gamma_2(z_0)$ intersect transversely at $z_0$, then $\gamma_{1,i}$ and $\gamma_{2,j}$ must intersect at a point close to $z_0$.  

Once we have the curves $\gamma_{1,i}$ and $\gamma_{2,j}$ we can bound the degree of $P$ by using a contagious vanishing argument.  For any degree $D$, we can choose a polynomial $Q$ of degree at most $D$ that vanishes on roughly $D^2 d^{-1}$ of the curves $\gamma_{1,i}$.  On the other hand, if $\gamma_{2,j}$ does not lie in $Z(Q)$, then $Q$ can vanish on at most $d D$ points of $\gamma_{2,j}$.  We choose $D$ so that $D^2 d^{-1} \gg d D$.  Choosing $D = 100 d^2$ is big enough.  Since $Q$ vanishes on $D^2 d^{-1}$ curves $\gamma_{1, i}$, it vanishes at $D^2 d^{-1}$ points of each curve $\gamma_{2,j}$, and so it vanishes on each curve $\gamma_{2,j}$.  Now we see that $Z(Q) \cap Z(P)$ contains infinitely many algebraic curves $\gamma_{2,j}$.  By the Bezout theorem, $P$ and $Q$ must have a common factor.  Since $P$ is irreducible, $P$ must divide $Q$.  But then $\Deg P \le \Deg Q \le 100 d^2$.  

This degree reduction argument is essentially the same as the one in Subsection \ref{subsecdegred}, but we get a better bound for the degree because the curves $\gamma_{1,i}$ and $\gamma_{2,j}$ have so many 2-rich points.  Here is a big picture summary of the proof of Theorem \ref{2richC3}.  First we used the combinatorial information to prove that the set of lines $\frak L$ has a little algebraic structure -- the lines lie in $Z(P)$ where the degree of $P$ is a bit smaller than for generic lines.  If $P$ is reducible, we divide the problem into essentially disjoint subproblems, and we assume from now on that $P$ is irreducible.  Second, we use the degree bound on $P$ and the combinatorial information about the lines to prove that $Z(P)$ is generically doubly ruled.  So our finite set of lines $\frak L$ fits into an algebraic family of lines with many 2-rich points.  Third, we extend $\frak L$ by adding a lot of other lines from the surface $Z(P)$.  By doing this, we can amplify the number of 2-rich points.  We get a new set of $N \gg L$ lines in $Z(P)$ with around $N^2$ 2-rich points.  Finally, we apply degree reduction to this bigger set of lines, and we get a much stronger estimate for the degree of $P$.

\section{Thoughts about higher dimensions}

In this last section, we reflect on how much ruled surface theory can tell us about incidence geometry in higher dimensions, and we point out some open problems.  What happens if we try to adapt the proof of Theorem \ref{2richC3} that we just sketched to higher dimensions?  We broke the proof of Theorem \ref{2richC3} into three stages.  We discuss each stage, but especially focusing on the last stage -- the classification of doubly ruled surfaces.

We suppose that $\frak L$ is a set of $L$ lines in $\CC^n$.  We suppose that $|P_2(\frak L)| \ge K L^{\frac{n}{n-1}}$.  We also make the minor assumption that each line contains about the same number of 2-rich points: so each line contains at least $K L^{\frac{1}{n-1}}$ points of $P_2(\frak L)$.  

\subsection{Degree reduction}  The degree reduction stage works in any dimension.  In $n$ dimensions, for any set of $N$ points, there is a polynomial of degree at most $C_n N^{1/n}$ vanishing on the set, and this bound is sharp for generic sets.  For any set of $L$ lines, there is a polynomial of degree at most $C_n L^{\frac{1}{n-1}}$ vanishing on each line, and this bound is sharp for generic sets of lines.  But if each line of $\frak L$ contains at least $K L^{\frac{1}{n-1}}$ 2-rich points of $\frak L$, then there is a polynomial $P$ vanishing on the lines of $\frak L$ with degree at most $C_n K^{\frac{-1}{n-2}} L^{\frac{1}{n-1}}$.  So we see that in any number of dimensions, if $K$ is large enough then $\frak L$ has some algebraic structure.  I think this suggests that it is a promising avenue to try to study $\frak L$ using algebraic geometry.

\subsection{Ruled surface theory}  Some of the tools we used in the second stage have generalizations to higher dimensions.  Landsberg \cite{L} has proven a version of Theorem \ref{CSM} in any number of dimensions.  Sharir and Solomon \cite{SS} generalized the flecnode polynomial to four dimensions and proved the four-dimensional analogue of Theorem \ref{Sal}.  Double-flecnode polynomials have so far only been defined in three dimensions.  In higher dimensions, there is one technical point which will be more difficult.  In $\CC^n$, there are doubly ruled varieties of every dimension between 2 and $n-1$.  Therefore, it is not enough to consider algebraic hypersurfaces, which can be written in the form $Z(P)$ for a single polynomial $P$ - we have to consider algebraic varieties of all dimensions.  If one could generalize the methods in this second stage to higher dimensions, it might be possible to prove the following conjecture.

\begin{conj} \label{conjdoubruled} Suppose that $\frak L$ is a set of $L$ lines in $\CC^n$.  Then either

\begin{itemize}

\item $|P_2(\frak L)| \le C(n) L^{\frac{n}{n-1}}$ or

\item There is a dimension $2 \le m \le n-1$, and a generically double-ruled affine variety $Z$ of dimension $m$ so that $Z$ contains at least $L^{\frac{m-1}{n-1}}$ lines of $\frak L$.  (Recall that an affine variety is irreducible by definition.)

\end{itemize}
\end{conj}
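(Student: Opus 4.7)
The plan is to run the three-stage strategy from Section 3 in $n$ dimensions, with the third stage upgraded to an induction on dimension. For the first stage, the paper already records the degree-reduction bound in $\CC^n$: because each line carries at least $K L^{1/(n-1)}$ points of $P_2(\frak L)$, one can find a nonzero $P \in \CC[z_1,\dots,z_n]$ vanishing on every line of $\frak L$ with $\Deg P \le C_n K^{-1/(n-2)} L^{1/(n-1)}$. Factor $P = \prod_j P_j$ and set $\frak L_j = \{l \in \frak L : l \subset Z(P_j)\}$. Mixed 2-rich points (those joining a line of $\frak L_j$ to a line outside $\frak L_j$) contribute at most $L \cdot \Deg P = O(K^{-1/(n-2)} L^{n/(n-1)})$, which we absorb into the first alternative of the conjecture by taking $K$ large. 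Thus we may assume $P$ is irreducible, the lines of $\frak L$ lie on the hypersurface $V = Z(P)$, and $V$ carries a nontrivial share of the 2-rich points.

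For the second stage, the goal is to show that $V$ is generically doubly ruled. I would combine Landsberg's $n$-dimensional Cayley-Salmon-Monge theorem with a doubly-flecnodal analogue of Sharir-Solomon's four-dimensional flecnode polynomial. Assuming such a finite list $\Flec_{2,j} P$ with $\Deg \Flec_{2,j} P = O_n(\Deg P)$, the same contagious-vanishing / pigeonhole / Bezout argument sketched after Theorem \ref{doubflecSal} works verbatim: each line of $\frak L$ meets $P_2(\frak L)$ in more points than $\Deg \Flec_{2,j} P$, so $\Flec_{2,j} P$ vanishes on $V$ for the appropriate $j$, forcing a generic point of $V$ to be doubly flecnodal, hence doubly ruled.

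The third stage is where genuinely new work is needed and where I expect the main obstacle. In $\CC^3$ one invokes Theorem \ref{classdoubruled} to conclude $\Deg P \in \{1,2\}$, which immediately gives the conclusion with $m = n-1 = 2$. For $n \ge 4$, doubly ruled varieties exist in every dimension $2 \le m \le n-1$ and cannot be classified by degree alone, so one must proceed by descent. The structural result I would try to establish is this: inside a generically doubly ruled irreducible hypersurface $V$ carrying at most $C(n) L^{n/(n-1)}$ two-rich points of $\frak L$, either the combinatorial conclusion already holds, or there exists a proper irreducible subvariety $V' \subset V$ with $\dim V' = m-1$ containing at least a $(\Deg V')^{-1}$ fraction of the lines of $\frak L$ that lie on $V$. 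Iterating this descent, the thresholds $L^{(m-1)/(n-1)}$ in the conjecture are exactly what the book-keeping of lines-per-subvariety forces at each drop, terminating either when the first alternative is reached or when $m = 2$ (where a planar/quadric-style classification must be invoked as a base case).

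The hardest part is the descent: given a generically doubly ruled $V$, one needs a quantitative stratification that isolates a sub-family of lines collecting in a lower-dimensional doubly ruled piece, together with a way to transfer the ``many 2-rich points'' hypothesis from $V$ down to $V'$ without losing more than a harmless polylogarithmic factor. Landsberg's theorem gives a single ruling but no stratification, and the doubly-flecnodal machinery as developed so far only detects the hypersurface case. Constructing the correct higher-codimension flecnodal varieties, and proving a classification (or at least a degree bound in terms of $m$ and $n$) for doubly ruled varieties of dimension $m$ in $\CC^n$, is the step where I would expect the proof to stand or fall; until that ingredient is in place, the first two stages alone do not deliver the sharp exponent $L^{(m-1)/(n-1)}$ demanded by the conjecture.
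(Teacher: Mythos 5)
The statement you are trying to prove is Conjecture \ref{conjdoubruled}: it is an open problem, the paper offers no proof of it, and your proposal --- as you yourself candidly acknowledge in the last paragraph --- is a research program rather than a proof. Your three-stage outline is in fact exactly the strategy the paper itself lays out in Section 4, and the places where you flag difficulty are the places where the problem is genuinely open. To make the gaps concrete: in your second stage, doubly flecnodal polynomials and the analogue of Theorem \ref{doubflecSal} have so far only been constructed in three dimensions, and even granting them, the contagion argument would only show that the \emph{hypersurface} $Z(P)$ produced by degree reduction is generically doubly ruled; the conjecture requires locating doubly ruled varieties of every intermediate dimension $2 \le m \le n-1$, which the hypersurface-based flecnodal machinery does not detect. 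In your third stage, the hoped-for ``degree bound in terms of $m$ and $n$ for doubly ruled varieties'' is actually \emph{false}: the graph $z_4 = P_1(z_3) z_1 + P_2(z_3) z_2 + Q(z_3)$ is an irreducible generically doubly ruled threefold of arbitrarily high degree. The correct replacement --- that a generically doubly ruled variety is generically ruled by subvarieties of dimension at least $2$ and bounded degree --- is precisely Question \ref{classques}, which is open; without an affirmative answer your descent step has no engine, and the bookkeeping that is supposed to produce the exponent $L^{(m-1)/(n-1)}$ never gets off the ground.

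Two smaller points. First, your claim that the mixed 2-rich points can be ``absorbed into the first alternative'' and that one may then ``assume $P$ is irreducible'' elides a real reduction: after factoring, different components $Z(P_j)$ contain different numbers of lines, and one must redo the per-line rich-point count on each component separately (the paper's three-dimensional treatment handles this, but it is not free). Second, even in the one higher-dimensional case where partial results exist ($\RR^4$ and $\CC^4$, via \cite{SS} and \cite{GZ2}), the known statements either require $r$ large or lose factors relative to the conjectured bound, which is further evidence that the second and third stages do not yet ``work verbatim.'' In short: your proposal correctly reconstructs the intended attack and correctly diagnoses where it breaks, but what it describes is the open problem itself, not a proof.
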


We can generalize this conjecture to algebraic curves as follows.

\begin{conj} \label{conjdoubruledcurv} Suppose that $\Gamma$ is a set of $L$ irreducible algebraic curves in $\CC^n$ of degree at most $d$.  Then either

\begin{itemize}

\item $|P_2(\Gamma)| \le C(d, n) L^{\frac{n}{n-1}}$ or

\item There is a dimension $2 \le m \le n-1$, and an (irreducible) affine variety $Z$ of dimension $m$ which is generically doubly ruled by algebraic curves of degree at most $d$ and contains at least $L^{\frac{m-1}{n-1}}$ curves of $\Gamma$.

\end{itemize}
\end{conj}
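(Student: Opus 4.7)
The plan is to imitate the three-stage strategy laid out in Section 3, pushing each stage from $\CC^3$ to $\CC^n$ while iterating on dimension whenever an intermediate step fails to produce a variety of the desired kind. I would set this up as a downward induction on the dimension of a containing variety: at each stage I work inside an irreducible variety $W \subset \CC^n$ of some dimension $m$ that contains many curves of $\Gamma$, with each such curve carrying many 2-rich points lying in $W$, and I either stop (because $W$ is generically doubly ruled by degree-$\le d$ curves and has controlled degree) or descend to a proper subvariety of $W$.

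\textbf{Stage 1 (degree reduction).} I would run the random-subset/contagious-vanishing argument of Subsection \ref{subsecdegred}, adapted from lines to degree-$\le d$ curves, to produce a nonzero polynomial $P$ of degree $\lesssim K^{-1/(n-2)} L^{1/(n-1)}$ vanishing on every curve of $\Gamma$ (with the implicit constant depending on $d$ and $n$). Factoring $P$ into irreducible components, pigeonholing, and accounting for mixed 2-rich points as in the proof of Corollary 2.2, I reduce to the case that almost all 2-rich points lie on curves contained in a single irreducible hypersurface $V$ with many curves of $\Gamma$. This starts the induction with $W = V$ and $m = n-1$.

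\textbf{Stage 2 (ruled-variety theory and descent).} Inside an irreducible $m$-dimensional $W$ containing many curves with many 2-rich points, I want to decide between \emph{$W$ is generically doubly ruled by degree-$\le d$ curves} and \emph{the curves of $\Gamma \cap W$ all lie in a proper subvariety $W' \subsetneq W$}. The key new ingredient is a higher-dimensional, higher-degree analogue of Theorem \ref{doubflecSal}: for fixed $d, m, n$ one defines the locus of doubly $r$-flecnodal points of $W$ with respect to degree-$\le d$ curves (points through which pass two distinct such curves tangent to $W$ to order $r$) and shows it is cut out on $W$ by polynomials of degree controlled in terms of $\Deg W$. Given such polynomials, contagious vanishing combined with a Bezout estimate on $W$ forces either that $W$ is generically doubly $r$-flecnodal, or that almost all curves of $\Gamma \cap W$ lie in a proper subvariety $W'$, on which I recurse with dimension $\le m-1$. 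To close the first alternative I need a higher-dimensional analogue of Theorem \ref{CSMdoub}, along the lines of the single-ruled result of Landsberg \cite{L}, asserting that generic double $r$-flecnodality upgrades to generic double ruling by degree-$\le d$ curves.

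\textbf{Stage 3 (degree bound on $W$).} Once $W$ is known to be generically doubly ruled by degree-$\le d$ curves, I would bound $\Deg W$ by a constant $D(d,n)$ by generalizing Subsection \ref{sketchclassif}. At a smooth point $z_0$ where the two curves meet transversely inside $W$, small perturbations produce arbitrarily large families $\{\gamma_{1,i}\}$ and $\{\gamma_{2,j}\}$ of degree-$\le d$ curves in $W$ pairwise meeting at distinct points. An internal degree-reduction argument on $W$ then traps the $\gamma_{2,j}$'s inside $Z(Q) \cap W$ for a polynomial $Q$ of degree at most a constant depending on $d$ and $n$, and a Bezout-type intersection estimate on $W$ forces every defining equation of $W$ to divide $Q$. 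Pigeonholing against the descent in Stage 2 then delivers an irreducible $Z$ of dimension $m$ and degree $\le D(d,n)$ containing at least $L^{(m-1)/(n-1)}$ curves of $\Gamma$.

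The main obstacle I expect is the intermediate-dimensional form of Stage 2, when $2 \le m < n-1$. In this regime $W$ is not a hypersurface, so the flecnode polynomials of Salmon, Landsberg, and Sharir--Solomon do not apply directly; one must build analogues intrinsically on $W$, control their degrees in terms of $\Deg W$ rather than a single ambient polynomial, and do this for families of degree-$d$ curves rather than lines. The dimensional heuristic the author attributes to the referee for why three derivatives are the right order in $\CC^3$ must also be reworked: the right tangency order $r$ depends on $d$, $m$, and $n$ through the dimension of the space of degree-$\le d$ curves through a point of $W$, and it is not obvious which $r$ simultaneously makes contagious vanishing and ``flecnodal implies ruled'' go through. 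A clean intrinsic Bezout theorem for curves on possibly singular intermediate-dimensional varieties, needed to run the contagious-vanishing step on $W$ itself rather than on $\CC^n$, appears to be the central technical hurdle.
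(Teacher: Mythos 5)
The statement you are asked about is Conjecture \ref{conjdoubruledcurv}, which the paper explicitly presents as \emph{open}: there is no proof of it in the paper, and the author says only that ``if one could generalize the methods in this second stage to higher dimensions, it might be possible to prove'' the lines version (Conjecture \ref{conjdoubruled}), of which this is a further generalization. Your proposal is therefore not a proof but a research program, and the places where it defers to unestablished results are exactly the gaps the paper itself identifies. Concretely: (i) your Stage 2 requires a higher-dimensional, intermediate-codimension analogue of Theorem \ref{doubflecSal} (doubly flecnodal polynomials), but the paper states that ``double-flecnode polynomials have so far only been defined in three dimensions,'' and even the singly-flecnodal machinery has only been extended to hypersurfaces in $\CC^4$ by Sharir--Solomon; (ii) you need a higher-dimensional analogue of Theorem \ref{CSMdoub} (``generically doubly flecnodal implies generically doubly ruled''), which does not exist in the literature the paper cites — Landsberg's theorem \cite{L} is the singly-ruled statement; (iii) on an intermediate-dimensional variety $W$ that is not a hypersurface, there is no single defining polynomial $P$, so the entire Bezout/contagious-vanishing mechanism must be rebuilt intrinsically, which you correctly flag as ``the central technical hurdle'' but do not resolve. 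You also never justify the quantitative heart of the conjecture — why the descent should terminate with a variety of dimension $m$ containing at least $L^{(m-1)/(n-1)}$ curves; the pigeonholing that produces this exponent is asserted, not argued, and it is where the real content of the statement lies.

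Two smaller points. First, your Stage 3 (bounding $\Deg Z$ by $D(d,n)$) is not needed for Conjecture \ref{conjdoubruledcurv} as stated — that conclusion only asks for a generically doubly ruled variety, with no degree bound; the degree bound belongs to the stronger Conjecture \ref{mainconj}, and the paper points out (via the planes-fibered graph example $z_4 = P_1(z_3)z_1 + P_2(z_3)z_2 + Q(z_3)$) that doubly ruled varieties in higher dimensions can have arbitrarily large degree, so a naive generalization of Subsection \ref{sketchclassif} cannot give a uniform degree bound; the correct replacement is the content of the open Questions \ref{classques}--\ref{classquesm}. Second, the conjecture as stated over $\CC$ has no characteristic-$p$ hypothesis to worry about, but your Stage 1 exponent $K^{-1/(n-2)}L^{1/(n-1)}$ is the easy part and is consistent with the paper's Subsection on degree reduction; the failure point is everything after it. In short: the approach is the one the author would presumably try, but as written it is an outline of the open problem, not a proof of it.
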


If Conjecture \ref{conjdoubruled} and/or \ref{conjdoubruledcurv} is true, it would point to a strong connection between incidence geometry and ruled surface theory.  On the other hand, it would probably not be useful in applications unless we could also prove a classification of doubly ruled varieties -- at least a very rough classification.  So let us turn now to the problem of the classification of doubly ruled varieties.

\subsection{Classification of doubly ruled varieties} The classification of doubly ruled surfaces in $\CC^3$ was fairly simple, but in higher dimensions, this part of the problem may become a lot more complex.  I would like to propose a question about doubly ruled varieties that could be useful to understand for applications to incidence geometry.  

To get started, we might ask, if $Y^m \subset \CC^n$ is a generically doubly ruled (irreducible) variety, does it follow that $\Deg Y \le C(n)$?  The answer to this question is no.  It may happen that every point of $Y$ lies in a 2-plane in $Y$.  Such a variety is clearly doubly ruled, and it may have an arbitrarily high degree.  For a high degree example, suppose that $Y$ is a graph of the form

$$ z_4 = P_1(z_3) z_1 + P_2(z_3) z_2 + Q(z_3), $$

\noindent where $P_1, P_2,$ and $Q$ are polynomials of high degree.  If $w = (w_1, w_2, w_3, w_4) \in Y$, then $w$ lies in the following 2-plane in $Y$:

$$ z_3 = w_3; z_4 = P_1(w_3) z_1 + P_2(w_3) z_2 + Q(w_3). $$

\noindent If $P_1, P_2$, or $Q$ have high degree, then $Y$ will have high degree also.  (Also the algebraic set $Y$ is in fact irreducible for any chocie of $P_1, P_2, Q$.)

Suppose for a moment that the variety $Y$ that we find in the second stage is a graph of this form, and suppose for simplicity that every line of $\frak L$ lies in $Y$.  For a typical $P_1, P_2, Q$, every line in $Y$ is contained in one of the planes above.  Suppose for a moment that our variety $Y$ has this convenient property.  Then we can separate the lines of $\frak L$ into subsets corresponding to different 2-planes.  Since each line of $\frak L$ contains at least $K L^{\frac{1}{n-1}}$ 2-rich points, one of the 2-planes must contain at least $K L^{\frac{1}{n-1}}$ lines of $\frak L$, and this satisfies the conclusion of Conjecture \ref{mainconj}.

I don't know whether there are more exotic examples of doubly ruled varieties than this one.  Let me introduce a little vocabulary so that I can make an exact question.  We say that a variety $Y$ is ruled by varieties with some property $(*)$ if each point $y \in Y$, lies in a variety $X \subset Y$ where $X$ has property $(*)$.  We say that a variety $Y$ is doubly ruled by varieties with property $(*)$ if each point $y$ lies in two distinct varieties $X_1, X_2 \subset Y$ with property $(*)$.  We say that $Y$ is generically ruled by varieties with property $(*)$ if a generic point $y \in Y$ lies in a variety $X \subset Y$ with property $(*)$, and so on.

\begin{question} \label{classques} Suppose that $Y \subset \CC^n$ is a variety which is generically doubly ruled (by lines).  Does it follow that $Y$ is generically ruled by varieties with dimension at least 2 and degree at most $C(n)$?
\end{question}

To the best of my knowledge this question is open.  Noam Solomon pointed me to a relevant paper in the algebraic geometry literature by Mezzetti and Portelli, \cite{MP}.  Under a technical condition, this paper gives a classification of doubly ruled 3-dimensional varieties in $\mathbb{CP}^4$ -- see Theorem 0.1.  The technical condition is that the Fano scheme of lines of $Y$ is generically reduced.  If $Y$ is generically doubly ruled and obeys this condition, then the classification from Theorem 0.1 of \cite{MP} implies that either $Y$ has degree at most 16 or $Y$ is generically ruled by 2-dimensional varieties of degree at most 2.  

We can also pose more general questions in a similar spirit to Question \ref{classques}.  

\begin{question} \label{classquescurv} Suppose that $Y \subset \CC^n$ is generically doubly ruled by (irreducible) algebraic curves of degree at most $d$.  Does it follow that $Y$ is generically ruled by varieties with dimension at least 2 and degree at most $C(d,n)$?
\end{question}

\begin{question} \label{classquesm} Suppose that $Y \subset \CC^n$ is generically doubly ruled by varieties of dimension $m$ and degree at most $d$.  Does it follow that $Y$ is generically ruled by varieties with dimension at least $m+1$ and degree at most $C(d, m, n)$?
\end{question}

If the answers to Questions \ref{classques} and \ref{classquescurv} are affirmative, then I think it would be promising to try to prove Conjecture \ref{mainconj} using tools from ruled surface theory.  If the answer to Question \ref{classques} is no, then it means that there are some exotic doubly ruled varieties $Y \subset \CC^n$.  These varieties would be a potential source of new examples in incidence geometry, and could possibly lead to counterexamples to Conjecture \ref{mainconj}.  

For a given variety $Y$ containing many lines, it looks interesting to explore incidence geometry questions for sets of lines in $Y$.  This circle of questions was raised by Sharir and Solomon in \cite{SS}.  In particular, they raised the following question.

\begin{question} \label{rrichinquad} Suppose that $Y$ is the degree 2 hypersurface in $\RR^4$ defined by the equation

$$ x_1 = x_2^2 + x_3^2 - x_4^2. $$

\noindent For a given $r$, what is the maximum possible size of $|P_r(\frak L)|$? \end{question}

This question was studied by Solomon and Zhang in \cite{SZ}, building on earlier work of Zhang \cite{Z}.  They constructed an example with many $r$-rich points.  Counting the number of $r$-rich points in the example is non-trivial and they used tools from analytic number theory to do so.  Their construction gives $\sim L^{3/2} r^{-3}$ $r$-rich points.  Since a generic point of $Y$ lies in infinitely many lines in $Y$, it is easy to produce examples with $\sim L r^{-1}$ $r$-rich points, so their example is interesting when $r$ is smaller than $L^{1/4}$.  The best known upper bound on $|P_r(\frak L)|$ is based on a random projection argument.  Rudnev used a closely related random projection argument in \cite{R} -- cf. the bottom of page 6 of \cite{R}.  We note that $Y$ does not contain any 2-plane.  Since $Y$ is the zero set of a degree 2 polynomial, the intersection of $Y$ with a 2-plane may contain at most two lines.  Therefore, we see that $\frak L$ contains at most two lines in any 2-plane.  Now we let $\frak L'$ be the projection of $\frak L$ to a generic 3-plane.  For a generic choice of the projection we see that $| \frak L' | = | \frak L|$, $|P_r(\frak L')| = |P_r(\frak L)|$, and $\frak L'$ contains at most two lines in any 2-plane.  We then bound $|P_r(\frak L')|$ using Theorem 4.5 from \cite{GK}, giving the bound $|P_r(\frak L)| = |P_r(\frak L')| \lesssim L^{3/2} r^{-2} + L r^{-1}$.  There is a large gap between the upper and lower bounds.  The random projection argument does not seem to use much of the structure of $Y$: as Rudnev points out in \cite{R}, the space of lines in $\RR^3$ is 4-dimensional while the space of lines in $Y$ is only 3-dimensional.  

We can ask the same question over the complex numbers.  The example of \cite{SZ} is still the best lower bound.  For upper bounds, the random projection argument still works, but Theorem 4.5 from \cite{GK} is not known over the complex numbers.  In the complex case, the best upper bound comes from applying Theorem 2 of \cite{K}, giving the bound $|P_r(\frak L)| \lesssim L^{3/2} r^{-3/2} + L r^{-1}$.  

In Question \ref{rrichinquad}, the interesting case is when $r > 2$.  The variety $Y$ contains the subvariety $x_1 = x_3^2 - x_4^2$, $x_2 = 0$.  It is not difficult to construct a set of $L$ lines in this subvariety with $L^2/4$ 2-rich points by modifying the example at the start of Section 2.  But for cubic hypersurfaces, it looks hard to estimate the number of 2-rich points.  For example, we can ask the following question.

\begin{question} \label{2richincubic} Suppose that $Y$ is the degree 3 hypersurface in $\CC^4$ defined by the equation

$$ z_1^3 + z_2^3 + z_3^3 + z_4^3 = 1. $$

Suppose that $\frak L$ is a set of $L$ lines in $Y$.  What is the maximum possible size of $P_2(\frak L)$?

\end{question}

(I believe that a generic point of this cubic hypersurface $Y$ lies in six lines in $Y$.  Here is a heuristic argument for this guess.  Fix a point $p \in Y$ and translate the coordinate system so that $p = 0$.  In the new coordinate system, $Y$ is given as the zero set of a polynomial $P$ of the form $P = P_3(z) + P_2(z) + P_1(z)$, where $P_i(z)$ is homogeneous of degree $i$.  (There is no zeroth order term because we have arranged that $0 \in Z(P)$ and so $P(0) = 0$.)   For a non-zero $z$, the line from 0 through $z$ lies in $Y = Z(P)$ if and only if $P_3(z) = P_2(z) = P_1(z)=0$.  So the set of lines in $Y$ through $p$ is given by intersecting a degree 3 hypersurface, a degree 2 hypersurface, and a degree 1 hypersurface in $\mathbb{CP}^3$.  For a generic choice of these hypersurfaces, the intersection will consist of six elements of $\mathbb{CP}^3$, and I believe that this occurs at a generic point of $Y$.)

Note that it does matter which cubic hypersurface we consider.  The cubic hypersurface $z_4 = z_1 z_2 z_3$ contains a 2-dimensional degree 2 surface defined by $z_3 = 1$, $z_4 = z_1 z_2$, and this surface contains $L$ lines with $L^2/4$ 2-rich points, as in the example at the start of Section 2.  I believe that the cubic hypersurface $z_1^3 + z_2^3 + z_3^3 + z_4^3 = 1$ does not contain any 2-dimensional variety of degree 2.  If this is the case, then we can get a non-trivial upper bound by a random projection argument.  By a version of the Bezout theorem, the intersection of $Y$ with any degree 2 2-dimensional variety will contain at most 6 lines.  Randomly projecting $\frak L$ to $\CC^3$, we get a set of lines $\frak L'$ with at most 6 lines of $\frak L'$ in any 2-plane or degree 2 surface.  Then applying Theorem \ref{2richC3}, we see that $|P_2(\frak L)| = |P_2(\frak L')| \lesssim L^{3/2}$.  But I suspect that the maximum size of $|P_2(\frak L)|$ is much smaller than $L^{3/2}$.  

I think that these questions about lines in low degree varieties are a natural direction of research in incidence geometry.  If there are more exotic doubly-ruled varieties $Y$, then it would also be natural to study analogous questions for them.


\begin{thebibliography}{5}

\vskip.125in

\bibitem[BKT]{BKT} J. Bourgain, N. Katz, T. Tao, A sum-product estimate in finite fields, and applications. Geom. Funct. Anal. 14 (2004), no. 1, 27-57. 

\bibitem[ES]{ES} Gy. Elekes and M. Sharir, {Incidences in three dimensions and distinct distances
in the plane}, Proceedings 26th ACM Symposium on Computational Geometry (2010) 413-422.

\bibitem[EH]{EH} J. Ellenberg and M. Hablicsek, An incidence conjecture of Bourgain over fields of positive characteristic,  arXiv:1311.1479,  Forum Math. Sigma 4 (2016), e23, 9 pp. 

\bibitem[G]{G} L. Guth, {\it Polynomial methods in combinatorics}, University Lecture Series 64, AMS press, 2016.

\bibitem[GK]{GK} L. Guth and N. Katz, On the Erd{\H o}s distinct distance problem in the plane, Annals of Math 181 (2015), 155-190.  

\bibitem[GZ1]{GZ1} L. Guth and J. Zahl, Algebraic curves, rich points, and doubly-ruled surfaces, arXiv:1503.02173

\bibitem[GZ2]{GZ2} L. Guth and J. Zahl, Curves in $\RR^4$ and 2-rich points,  arXiv:1512.05648

\bibitem[H]{H} J. Harris, {\it Algebraic Geometry, a first course}, Corrected reprint of the 1992 original. Graduate Texts in Mathematics, 133. Springer-Verlag, New York, 1995. 

\bibitem[Ka]{Ka} N. Katz, The flecnode polynomial: a central object in incidence geometry, Proceedings of the 2014 ICM,  arXiv:1404.3412 

\bibitem[Ko]{K}  J. Koll\'ar, SzemerŽdi-Trotter-type theorems in dimension 3. Adv. Math. 271 (2015), 30-61.

\bibitem[KMS]{KMS}  H. Kaplan, J. Matou\v{s}ek, M. Sharir, Simple proofs of classical theorems in discrete geometry via the Guth-Katz polynomial partitioning technique. Discrete Comput. Geom. 48 (2012), no. 3, 499-517. 

\bibitem[L]{L} J.M. Landsberg, {\it Is a linear space contained in a submanifold - On the number of
the derivatives needed to tell}, Journal f\"ur die reine und angewandte Mathematik (1999) 508,   53 - 60.

\bibitem[M]{Ma} J. Matou\v{s}ek, { \it Using the Borsuk-Ulam theorem}, Springer, Universitext, 2nd printing 2008.

\bibitem[MP]{MP} E. Mezzetti, D. Portelli, On threefolds covered by lines. Abh. Math. Sem. Univ. Hamburg 70 (2000), 211-238. 

\bibitem[P]{P} J. Plucker, Neue Geometrie des Raumes gegrŸndet auf die Betrachtung der geraden Linie als Raumelemente, Leipzig 1869.

\bibitem[PW]{PW} H. Pottmann and J. Wallner, {\it Computational Line Geometry}, Springer 2001. 

\bibitem[R]{R} M. Rudnev, On the number of incidences between points and planes in three dimensions, arXiv:1407.0426

\bibitem[RRS]{RRS} O. Roche-Netwon, M. Rudnev, and I. Shkredov, New sum-product type estimates over finite fields,  arXiv:1408.0542,  Adv. Math. 293 (2016), 589-605.

\bibitem[RuSe]{RuSe} M. Rudnev and J. Selig, On the use of Klein quadric for geometric incidence problems in two dimensions, arxiv 1412.2909,  SIAM J. Discrete Math. 30 (2016), no. 2, 934-954.

\bibitem[Sa]{Sa} G. Salmon, {\it A Treatise on the Analytic Geometry of Three Dimensions}, Vol. 2, 5th edition
Hodges, Figgis And Co. Ltd. (1915).

\bibitem[SSS]{SSS} M. Sharir, A. Sheffer, and N. Solomon, Incidences with curves in $\RR^d$,  arXiv:1512.08267,  AlgorithmsÑESA 2015, 977Ð988, Lecture Notes in Comput. Sci., 9294, Springer, Heidelberg, 2015.

\bibitem[SS]{SS} M. Sharir, N. Solomon, Incidences between points and lines in $\RR^4$,  arXiv:1411.0777

\bibitem[SZ]{SZ} N. Solomon, R. Zhang, Highly incidental patterns on a quadratic hypersurface in $\mathbb{R}^4$,  arXiv:1601.01817 

\bibitem[SoTa]{SoTa} J. Solymosi and T. Tao, An incidence theorem in higher dimensions. Discrete Comput. Geom. 48 (2012), no. 2, 255-280.

\bibitem[T]{Toth} The Szemer\'edi-Trotter theorem in the complex plane.  aXiv:math/0305283, 2003,  Combinatorica 35 (2015), no. 1, 95-126.

\bibitem[Za]{Zahl} J. Zahl, A Szemer\'edi-Trotter type theorem in $\RR^4$. Disc. Comput. Geom, 54(3):513-572, 2015.

\bibitem[Z]{Z} R. Zhang, Polynomials with dense zero sets and discrete models of the Kakeya conjecture and the Furstenberg set problem,  arXiv:1403.1352

\end{thebibliography}
\end{document}